\documentclass{amsart}

\usepackage{graphicx}
\usepackage[all]{xy}
\usepackage{color}
\usepackage{amsmath}
\usepackage{amsfonts}
\usepackage{amsthm}
\usepackage{amssymb}
\usepackage{mathrsfs}

\theoremstyle{theorem}
\newtheorem{thrm}{Theorem}
\newtheorem{lem}[thrm]{Lemma}
\newtheorem{cor}[thrm]{Corollary}
\newtheorem{prop}[thrm]{Proposition}
\newtheorem{conj}[thrm]{Conjecture}
\newtheorem{quest}[thrm]{Question}

\numberwithin{thrm}{subsection} 

\def \Dj{\mbox{\raise0.3   ex\hbox{-}\kern-0.4em D}}

\newcommand{\Crit}{\operatorname{Crit}}

\newcommand{\im}{\operatorname{im}}

\newcommand{\id}{{\bf{1}}}

\theoremstyle{remark}
\newtheorem{rem}[thrm]{Remark}
\newtheorem{exm}[thrm]{Example}
\newtheorem{defn}[thrm]{Definition}

\begin{document}

\title[Persistence barcodes and Laplace eigenfunctions]{Persistence barcodes and  Laplace eigenfunctions on surfaces}

\author[Iosif Polterovich]{Iosif Polterovich\textsuperscript{1}}
\address{D\'e\-par\-te\-ment de math\'ematiques et de
sta\-tistique, Univer\-sit\'e de Mont\-r\'eal,  CP 6128 succ
Centre-Ville, Mont\-r\'eal,  QC  H3C 3J7, Canada.}
\email{iossif@dms.umontreal.ca}
\author[Leonid Polterovich]{ Leonid Polterovich\textsuperscript{2}}
\address{School of Mathematical Sciences, Tel Aviv University, Ramat Aviv, Tel Aviv 69978
Israel}
\email{polterov@post.tau.ac.il}
\author[Vuka\v sin Stojisavljevi\'c ]{Vuka\v sin Stojisavljevi\'c\textsuperscript{2}}
\address{School of Mathematical Sciences, Tel Aviv University, Ramat Aviv, Tel Aviv 69978
Israel}
\email{vukasin@post.tau.ac.il}
\footnotetext[1]{Partially supported by
NSERC, FRQNT and Canada Research Chairs program.}
\footnotetext[2]{Partially supported by the European Research Council Advanced grant 338809.}
\maketitle

\begin{abstract}
We obtain restrictions on the persistence barcodes of Laplace-Beltrami eigenfunctions and their linear combinations on compact surfaces with Riemannian metrics. Some applications to uniform approximation  by linear combinations of Laplace eigenfunctions are also discussed.
\end{abstract}

\tableofcontents

\section{Introduction and main results}
\subsection{Laplace-Beltrami eigenfunctions}
The past fifteen years have witnessed a number of fascinating applications of the spectral theory of the Laplace-Beltrami operator  to data analysis, such as dimensionality reduction and data representation
\cite{BN,CL} or shape segmentation in computer graphics \cite{SOCG,Re}. In the present paper we focus on this
interaction the other way around and study persistence barcodes, a fundamental notion originated in topological data analysis, of the Laplace-Beltrami eigenfunctions and their linear combinations. Our main finding is a constraint on such barcodes in terms of the corresponding eigenvalues. This result turns out to
have applications to approximation theory.

Let $M$ be a compact $n$-dimensional Riemannian manifold, possibly with nonempty boundary. Let $\Delta$ be the (positive definite) Laplace-Beltrami operator on $M$; if $\partial M \neq \emptyset$ we assume that the Dirichlet condition is imposed on the boundary.
The spectrum of the Laplace-Beltrami operator on a compact Riemannian manifold is discrete, and the eigenvalues form a sequence $0\le \lambda_1 \le \lambda_2\le  \dots \nearrow \infty$, where each eigenvalue is repeated according to its multiplicity. The corresponding eigenfunctions $f_k$, $\Delta f_k=\lambda_k f_k$, form  an orthonormal basis in $L^2(M)$.
The  properties of  Laplace-Beltrami  eigenfunctions have fascinated researchers for more than two centuries, starting with the celebrated Chladni's experiments with vibrating plates.
We refer to \cite{JNT, Z1, Z2} for a modern overview of the subject.   As the examples of trigonometric polynomials and spherical harmonics indicate, the shapes of the eigenfunctions are expected  to have an increasingly complex structure  as $\lambda$ goes to infinity.  At the same time, various  restrictions on the behaviour of eigenfunctions can be formulated in terms of the corresponding eigenvalue.   One of the basic facts about eigenfunctions is  Courant's nodal domain theorem,  stating that the number of nodal domains of an eigenfunction $f_k$  is at most $k$ (see \cite{CH}).
There exist also bounds on the $(n-1)$-dimensional measure of the zero set of eigenfunctions (see \cite{L1, L2, ML} for most recent developments on this topic),
on the distribution of nodal extrema (\cite{PolSod, Po}), on the growth of $L^p$-norms (\cite{S}),  and other related results.

In the present paper we focus on topological properties of the sublevel sets of Laplace-Beltrami eigenfunctions, and, more generally, of the linear combinations of eigenfunctions with  eigenvalues $\leq \lambda$. There has been a number of important recent advances in the study of  topological properties of random linear combinations of Laplace eigenfunctions, with an emphasis on the nodal and critical sets (see, for instance, \cite{NS, Nic, GW, GW2,  SW, CS}). Our approach is deterministic and is based on the study of {\it  persistence barcodes}.  In the probabilistic setting, some steps  in this direction have been discussed  in  \cite[Section 1.4.3]{CMW}, see also \cite{PauSt}. Roughly speaking, a persistence barcode is a collection of intervals in $\mathbb{R}$ which encodes oscillation of a function (see next subsection for a detailed overview). Our main result (Theorem \ref{Persistence_Bound}) implies that the quantity $\Phi_1(f)$, the total length of the barcode of any such linear combination $f$ with unit $L^2$-norm, satisfies an upper bound $O(\lambda)$. This inequality is inspired by the ideas introduced in \cite{PolSod}, where a similar bound was proved for the {\it Banach indicatrix} of $f$, another measure of oscillation which goes back to the works of Kronrod \cite{Kron} and Yomdin \cite{Yom}. Our central observation  (see Proposition~\ref{Bound_Topological} below) is that the length of the barcode admits an upper bound via the Banach indicatrix, which together with \cite{PolSod} yields the main result.

We believe that discussing eigenfunctions and their linear combinations in the language of barcodes, which originated in topological data analysis, has a number of merits. First, there exists a well developed metric theory of barcodes which highlights their robustness with respect to perturbations of functions in the uniform norm. Some features of this robustness are inherited by the above-mentioned functional $\Phi_1$.  This, in turn, paves the way for applications to the following question of approximation theory (see Section~\ref{section-approximations by eigenfunctions}): given a function with unit $L^2$-norm, how well one can approximate it by a linear combination of Laplace eigenfunctions  with  eigenvalues $\leq \lambda$?  In particular, we show that a highly oscillating function does not admit a good uniform approximation of this kind unless $\lambda$ is large enough, see  Corollary \ref{214}.  Second, our approximation results remain valid  if a given function is composed  with a diffeomorphism of the surface, see Proposition \ref{211}. Our approach yields  it essentially for free, given that the barcodes are invariant with respect to compositions with diffeomorphisms. Note that the effect of a change of variables on analytic properties of functions is a classical theme in Fourier analysis, cf. the celebrated Bohr-P\'al theorem \cite{BP}.   Third, we conjecture that barcodes provide a right framework for a potential extension of our results to higher dimensions, see Conjecture \ref{higherdimconj} below.

In a different direction, we present an application to the problem of sorting finite bars of persistence barcodes. This task arises
on a number of  occasions in topology and data analysis. Our results allow to improve an estimate on the optimal running time of a sorting algorithm for barcodes of linear combinations of Laplace eigenfunctions  with  eigenvalues $\leq \lambda$, see subsection \ref{sort}.
\subsection{Persistence modules and barcodes}
\label{pers}
In order to describe the topology of the sublevel sets of eigenfunctions and their linear combinations,  we use the notions of persistence modules and barcodes,  which are briefly reviewed below. We refer the reader to articles \cite{EH,G,Ca,W,BauerLesnick} and monographs \cite{E,Ou,pers-book} for an introduction into this rapidly developing subject and further details.

Let $M$ be a closed, connected, orientable, $n$-dimensional manifold, possibly with boundary and $f:M\rightarrow \mathbb{R}$ a Morse function (if $\partial M\neq \emptyset$, we assume that $f$ is Morse on $M \setminus \partial M$ and $f\vert_{\partial M} =const$, the value on the boundary being regular). For $k=0,\ldots,n$ define a family of finite dimensional real vector spaces depending on a parameter $t\in \mathbb{R}$ by setting
$$ V^t_k(f)=H_k(f^{-1}((-\infty,t));\mathbb{R}), $$
where $H_k(X; \mathbb{R})$ denotes the $k$-th homology with coefficients in $\mathbb{R}$ of a set $X$. If $s\leq t$ we have that $f^{-1}((-\infty,s))\subset f^{-1}((-\infty,t))$ and the inclusion of sublevel sets $i_{st}:f^{-1}((-\infty,s))\rightarrow f^{-1}((-\infty,t))$ induces a map
$$\pi_{st}=(i_{st})_*: V^s_k(f) \rightarrow V^t_k(f) \text{ for } k=0,\ldots,n.$$
These maps are called {\it comparison maps}. The family of vector spaces $V^t_k(f)$ together with the family of comparison maps $\pi_{st}$ forms an algebraic structure called a {\it persistence module}. We call this persistence module a degree $k$ persistence module associated to $f$. We define the \textit{spectrum} of $V_k^t$ as those points $r\in \mathbb{R}$ for which $V_k^t$ "changes when $t$ passes through $r$". More formally, if $r$ has a neighbourhood $U_r$ such that $\pi_{st}$ are isomorphisms for all $s,t\in U_r$, we say that $r$ is not inside spectrum and we say that $r$ is inside spectrum otherwise. One may readily check that under the above assumptions spectrum is finite and consists of critical values of $f$ (and possibly also the value of $f$ on the boundary of $M$ if $\partial M \neq \emptyset$). Moreover, if $a<b$ are consecutive points of the spectrum, then $\pi_{st}$ is an isomorphism for all $a<s<t\leq b$.

Persistence modules that we will consider will be associated to a certain function $f:M\rightarrow \mathbb{R}$ as described above. However, one may define a persistence module as an abstract algebraic structure, without the auxiliary function $f$. In this setting, a persistence module consists of a one-parametric family of finite dimensional\footnote[3]{These are sometimes refered to as {\it pointwise finite dimensional} persistence modules.} vector spaces $V^t,~t\in \mathbb{R}$ and a family of linear comparison maps $\pi_{st}:V^s\rightarrow V^t$ for $s\leq t$, which satisfy $\pi_{tt}=\id_{V^t}$ and $\pi_{st}\circ \pi_{rs}= \pi_{rt}$ for all real numbers $r\leq s \leq t.$ Some of the results that we will refer to, namely the {\it structure theorem} (whose early version appeared in \cite{Ba})  and the {\it isometry theorem}, may be formulated and proven in this abstract language.

According to the structure theorem for persistence modules (under some extra assumptions which hold for the persistence modules that  we consider), $V^t$ decomposes into a direct sum of simple persistence modules. Let $I=(a,b]$ or $I=(a,+\infty)$, $a,b \in \mathbb{R}$ and denote by $Q(I)=(Q(I),\pi)$ the persistence module which satisfies $Q^t(I)=\mathbb{R}$ for $t\in I$ and $Q^t(I)=0$ otherwise and $\pi_{st}=id$ for $s,t\in I$ and $\pi_{st}=0$ otherwise. Now
$$V^t \cong \bigoplus_i (Q^t(I_i))^{m_i},$$
where $m_i$ are finite multiplicities, $I_i$ are intervals of the form $(a_i,b_i]$ or $(a_i,+\infty)$ and $(Q(I),\pi)\oplus(Q(I'),\pi')=(Q(I)\oplus Q(I'),\pi\oplus\pi')$. The above decomposition is unique if we assume that $I_i\neq I_j$ when $i\neq j$. The multiset containing $m_i$ copies of $I_i$ is called the {\it barcode} of $V^t$ and is denoted by $\mathcal{B}(V^t)$ and intervals $I_i$ are called {\it bars}. In the case of a module $V^t=V_k^t(f)$ coming from a function $f$, we have that endpoints $a_i$, $b_i$ of all of bars belong to the spectrum of $V_k^t$ and the barcode $\mathcal{B}(V_k^t)=\mathcal{B}_k(f)$ is called the {\it degree $k$ barcode} of $f$. We also denote by $\mathcal{B}(f)=\cup_k \mathcal{B}_k(f)$ the {\it full barcode} of $f$. Under our assumptions $\mathcal{B}(f)$ is finite, i.e. it consists of finitely many distinct intervals with finite multiplicities.
{\exm Let $f:\mathbb{S}^1\rightarrow \mathbb{R}$ be a height function on a deformed circle (see Figure 1).
\begin{figure}
\begin{center}
\includegraphics[width=8cm]{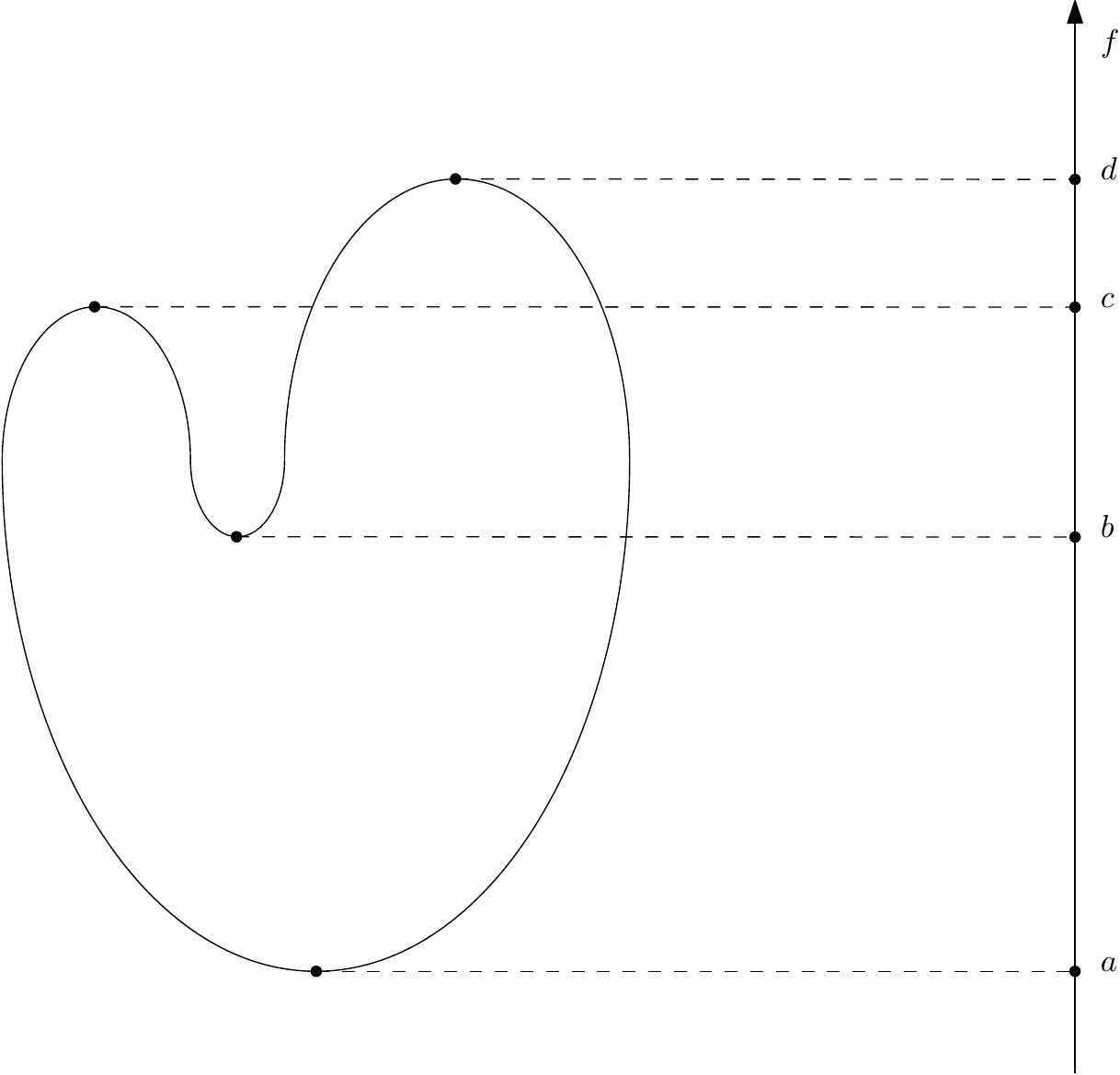}
\caption{Height function on a deformed circle.}
\end{center}
\end{figure}
Critical values of $f$ are $a,b,c$ and $d,$ and for a regular value $t\in \mathbb{R},$ the sublevel sets $f^{-1}((-\infty, t))$ are homeomorphic to:
$$f^{-1}((-\infty, t))=\begin{cases}
      \emptyset, & \text{for}\ t<a \\
      I, & \text{for}\ a<t<b \\
      I\sqcup I, & \text{for}\ b<t<c \\
      I, & \text{for}\ c<t<d \\
      \mathbb{S}^1, & \text{for}\ d<t \\
    \end{cases}$$
where $I$ stands for an open interval. Degree 1 barcode is now easily seen to contain one infinite bar $\mathcal{B}_1(f)=\{ (d,+\infty) \},$ while degree 0 barcode contains one infinite and one finite bar $\mathcal{B}_0(f)=\{ (a,+\infty),(b,c] \}.$ The finite bar $(b,c]$ corresponds to the fact that for $b<t<c$, $f^{-1}((-\infty, t))$ has two connected components which merge for $t>c.$ The full barcode is given by $\mathcal{B}(f)=\{(a,+\infty),(d,+\infty),(b,c]\}.$}
\\
\\
Let us introduce some notation and give a short description of the barcode associated to a function $f:M \rightarrow \mathbb{R}$ under the above assumptions. Denote by $b_i=\dim H_i(M;\mathbb{R}),~i=0,\ldots,n$ the Betti numbers of $M$. Bars of infinite lengths in degree $k$ barcode correspond to homology classes in $H_k(M;\mathbb{R})$. This means that the number of infinite bars in degree $k$ barcode is equal to $b_k$. Since $M$ is connected and orientable, we know that $b_0=1$, while $b_n=1$ if $\partial M = \emptyset$ and $b_n=0$ otherwise. Thus, if $\partial M = \emptyset$, degree 0 and degree $n$ barcodes each contain exactly one infinite bar $(c^{(0)},+\infty)$ and $(c^{(n)},+\infty)$, while degree $k$ barcode contains $b_k$ infinite bars $(c_1^{(k)},+\infty),\ldots ,(c_{b_k}^{(k)},+\infty)$, $c_1^{(k)} \leq \ldots \leq c_{b_k}^{(k)}$ for $1\leq k \leq n-1$. If $\partial M \neq \emptyset$, the barcode is similar, the only difference being that there are no infinite bars in degree $n$. Numbers $c_i^{(k)}$, $1\leq i \leq b_k$ are called \textit{spectral invariants} of a function $f$ in degree $k$ and it holds $c^{(0)}=\min f.$ If $\partial M=\emptyset$, we also have that $c^{(n)}=\max f.$ All finite bars are contained in $(\min f,\max f]$ and there are no finite bars in degree $n.$

The space of barcodes can be endowed with a natural distance. Let $\mathcal{B}$ and $\tilde{\mathcal{B}}$ be two barcodes. We say that $\mu:U\rightarrow \tilde{U}$ is an $\varepsilon$-matching between $\mathcal{B}$ and $\tilde{\mathcal{B}}$ if it is a bijection between some subsets $U\subset \mathcal{B} $ and $\tilde{U}\subset \tilde{\mathcal{B}}$ which contain all bars of length greater than $2\varepsilon$ and it satisfies
$$\mu((a,b])=(c,d] \Rightarrow |a-c|,|b-d|\leq \varepsilon.$$
Intuitively, we may delete some bars of length less or equal than $2\varepsilon$ from $\mathcal{B}$ and $\tilde{\mathcal{B}}$, and match the rest up to an error of $\varepsilon$. We may also think of the deleted bars as matched with intervals of length 0. Now, the {\it bottleneck distance} between $\mathcal{B}$ and $\tilde{\mathcal{B}}$, $d_{bottle}(\mathcal{B},\tilde{\mathcal{B}})$, is defined as infimum over all $\varepsilon \geq 0$ such that there exists an $\varepsilon$-matching between $\mathcal{B}$ and $\tilde{\mathcal{B}}$. One of the direct consequences of the isometry theorem is the following stability result which we will need later. For two functions $f$ and $g$ it holds
\begin{equation}\label{Stability}
  d_{bottle}(\mathcal{B}(f),\mathcal{B}(g)) \leq |f-g|_{C^0},
\end{equation}
where $|f-g|_{C^0}=\max\limits_x |f(x)-g(x)|$.
{\rem Full barcodes $\mathcal{B}(f)$ and $\mathcal{B}(g)$ which we consider do not keep track of the degrees. In particular, this means that we allow matchings between $\mathcal{B}(f)$ and $\mathcal{B}(g)$ to match bars from $\mathcal{B}_i(f)$ with bars from $\mathcal{B}_j(g)$ for $i\neq j.$ The isometry theorem can also be applied in each degree independently to obtain
$$\max_k d_{bottle}(\mathcal{B}_k(f),\mathcal{B}_k(g)) \leq |f-g|_{C^0}.$$
From here (\ref{Stability}) immediately follows because
$$d_{bottle}(\mathcal{B}(f),\mathcal{B}(g)) \leq \max_k d_{bottle}(\mathcal{B}_k(f),\mathcal{B}_k(g)).$$}

\subsection{A family of functionals on the space of barcodes}
From now on, we assume that $M$ is an orientable surface, possibly with boundary. Let us define, for every positive function $u\in C(\mathbb{R})$, a positive, lower semi-continuous functional $\Phi_u$ on the space of Morse  functions on $M$.
 Let $f$ be a Morse function, denote by $\mathcal{B'}(f)\subset \mathcal{B}(f)$ the multiset of all finite bars in the barcode $\mathcal{B}(f)$ and by $|\mathcal{B'}(f)|$ the total number of finite bars in $\mathcal{B}(f)$.
Define a positive functional $\Phi_u$ on the set
$\mathcal{F}_{Morse}$ of all Morse functions (vanishing on the boundary) by setting
\begin{equation}\label{Functional}
  \Phi_u(f)=\begin{cases}
     \int\limits_{\min f}^{\max f} u(t)~dt + \sum\limits_{I\in \mathcal{B'}(f)} \int\limits_I u(t)~dt & \mbox{if}~ \partial M = \emptyset, \\
     \\
     \int\limits_{\min f}^{0} u(t)~dt + \sum\limits_{I\in \mathcal{B'}(f)} \int\limits_I u(t)~dt &  \mbox{if }\partial M \neq \emptyset.
  \end{cases}
\end{equation}
In particular, $\Phi_1(f)$ is the sum of the lengths of all the finite bars in the barcode of $f$ and the length of the range of $f$. A related functional has been earlier considered in \cite{CSEHM}, see Remark \ref{remark:L^p}.


\begin{lem}
\label{lemma:semicon}
Let
$$C(u,f)=2\cdot (|\mathcal{B}'(f)| + 1) \cdot \max_{[\min f, \max f]} u $$
in the case $\partial M = \emptyset,$ or
$$C(u,f)= (2 |\mathcal{B}'(f)| + 1) \cdot \max_{[\min f, \max f]} u $$
in the case  $\partial M \neq \emptyset.$
Then
\begin{equation}\label{LSC_Barcode}
\Phi_u(f)-\Phi_u(h) \leq C(u,f) \cdot d_{bottle}(\mathcal{B}(f),\mathcal{B}(h)).
\end{equation}
\end{lem}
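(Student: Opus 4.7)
I will fix any $\varepsilon > d_{bottle}(\mathcal{B}(f),\mathcal{B}(h))$, choose an $\varepsilon$-matching $\mu$ between the full barcodes, estimate $\Phi_u(f)-\Phi_u(h)$ bar by bar using $\mu$, and then let $\varepsilon \searrow d_{bottle}$. Two combinatorial observations drive the bookkeeping: every infinite bar has length $+\infty > 2\varepsilon$ and hence lies in the domain (respectively image) of $\mu$; and a finite bar cannot be matched to an infinite one, because the difference of their right endpoints is $+\infty$. Therefore $\mu$ separates into a bijection between the (equinumerous) infinite bars of $f$ and of $h$ and an injection from some $S\subseteq \mathcal{B}'(f)$ into $\mathcal{B}'(h)$; the finite bars of $f$ outside $S$ automatically have length $\leq 2\varepsilon$.

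The first key step is to control the range integral using the matching of the \emph{infinite} bars. Since $\min f = c^{(0)}$ is the smallest number appearing in $\mathcal{B}(f)$, its $\mu$-partner is a left endpoint of some infinite bar of $h$ that lies within $\varepsilon$ of $\min f$ and is $\geq \min h$; this forces $\min h \leq \min f + \varepsilon$. The analogous computation starting from $\min h$ gives $\min f \leq \min h + \varepsilon$, so $|\min f - \min h|\leq \varepsilon$. When $\partial M=\emptyset$, the same argument applied to the largest endpoints (for $f$ this is $\max f = c^{(n)}$) yields $|\max f - \max h|\leq\varepsilon$; then the symmetric difference $[\min f,\max f]\triangle[\min h,\max h]$ has measure at most $2\varepsilon$, and
$$\int_{\min f}^{\max f}u - \int_{\min h}^{\max h}u \;\leq\; 2\varepsilon\cdot\max_{[\min f,\max f]}u.$$
In the case $\partial M\neq\emptyset$ the upper limit is fixed at $0\in[\min f,\max f]$ (because $f|_{\partial M}=0$), only the lower limit varies, and the analogous estimate sharpens to $\varepsilon\cdot\max_{[\min f,\max f]}u$.

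For the finite bars, any matched pair $(a,b]\mapsto(c,d]$ with $|a-c|,|b-d|\leq\varepsilon$ satisfies $(a,b]\setminus(c,d]\subseteq(\min f,\max f]$ with length at most $|a-c|+|b-d|\leq 2\varepsilon$, so
$$\int_a^b u - \int_c^d u \;\leq\; \int_{(a,b]\setminus(c,d]} u \;\leq\; 2\varepsilon\cdot\max_{[\min f,\max f]}u,$$
while each unmatched finite bar of $f$ has length $\leq 2\varepsilon$ and contributes at most $2\varepsilon\cdot\max_{[\min f,\max f]}u$; the unmatched finite bars of $h$ enter $\Phi_u(f)-\Phi_u(h)$ with a minus sign and are simply discarded for the upper bound. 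Summing over the $|\mathcal{B}'(f)|$ finite bars of $f$ yields a contribution of at most $2|\mathcal{B}'(f)|\cdot\varepsilon\cdot\max_{[\min f,\max f]}u$, and combining with the range estimate reproduces exactly the constants $C(u,f)$ in both cases. Letting $\varepsilon\searrow d_{bottle}$ finishes the proof. I do not foresee a serious obstacle: the only subtle point is that the ``extra $+1$'' in $C(u,f)$ is contributed not by any single finite bar but by the range integral, whose control is forced by the matching of the \emph{infinite} bars via the extremal-endpoint argument in the second paragraph; everything else is elementary interval bookkeeping.
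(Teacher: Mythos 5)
Your proof is correct and follows essentially the same approach as the paper's: split $\Phi_u(f)-\Phi_u(h)$ into the range-integral contribution, controlled by matching the infinite bars (whence the ``$+1$'' in $C(u,f)$), and the finite-bar contribution, controlled bar by bar through the $\varepsilon$-matching. The paper merely asserts the finite-bar estimate ``follows directly from the definitions'' and spells out only the range-integral argument (in the $\partial M\neq\emptyset$ case), but the bookkeeping it relies on is precisely what you carry out explicitly.
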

\begin{rem}\label{Critical_Points}
If $M$ has no boundary, then
\begin{equation}
\label{critpoints}
2\cdot (|\mathcal{B}'(f)| + 1)=|\Crit(f)|-b_1(M),
\end{equation}
where $|\Crit(f)|$ stands for the number of critical points of $f$ and $b_1=\dim H_1(M;\mathbb{R}).$ Morally speaking, each critical point of index $i$ produces either a left endpoint of a bar in degree $i$ or a right endpoint of a bar in degree $i-1$. This can be made precise in a number of ways  (see, for example,  \cite{W},  or \cite{UZ} for a  more general statement).  Therefore, taking into account the critical points corresponding to infinite bars, we get $|\Crit(f)|=2 |\mathcal{B}'(f)|+b_0(M)+b_1(M)+b_2(M)$, which implies \eqref{critpoints}. The same reasoning applies to general manifolds without boundary,  where we have
$$|\Crit(f)|=2 |\mathcal{B}'(f)| + \sum_{i=0}^{\dim M}b_i(M).$$
\end{rem}
We prove Lemma \ref{lemma:semicon} in subsection \ref{subs:semicon}. Combining (\ref{LSC_Barcode}) with (\ref{Stability}) yields
\begin{equation}\label{LSC_Functions}
\Phi_u(f)-\Phi_u(h)  \leq  C(u,f) \cdot d_{C^0}(f,h),
\end{equation}
where $d_{C^0}(f,h)=|f-h|_{C^0}.$
\begin{prop}
\label{lowersemicont}
The functional  $\Phi_u$ is lower semi-continuous both as a functional
$\Phi_u:(\mathbf{B},d_{bottle})\rightarrow \mathbb{R}$ and as a functional  $\Phi_u:(\mathcal{F}_{\text{Morse}},d_{C^0})\rightarrow \mathbb{R}$.
Here $\mathbf{B}$ stands for the set of all barcodes corresponding to functions in $\mathcal{F}_{\text{Morse}}.$
\end{prop}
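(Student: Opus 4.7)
The plan is to derive both lower semi-continuity statements directly from Lemma~\ref{lemma:semicon} and its $C^0$ counterpart (\ref{LSC_Functions}). The key observation is that those estimates are one-sided Lipschitz bounds of the form $\Phi_u(f)-\Phi_u(h)\leq C(u,f)\cdot d$, in which the constant $C(u,f)$ depends only on the fixed base point $f$ and not on $h$; such an estimate is precisely what is needed to extract lower (rather than upper or two-sided) semi-continuity at $f$.

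Before applying the lemma, I would first verify that $\Phi_u$ genuinely descends to a functional on $\mathbf{B}$, so that the bottleneck statement even makes sense. Inspection of (\ref{Functional}) shows that every ingredient can be read off from the barcode alone: $\mathcal{B}'(f)$ is the multiset of finite bars, $\min f$ is the left endpoint of the infinite bar in degree $0$, and in the closed case $\max f$ is the left endpoint of the infinite bar in degree $n$ (while in the boundary case it is replaced by the constant $0$). Hence $\Phi_u$ is unambiguously defined on $\mathbf{B}$.

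For lower semi-continuity on $(\mathbf{B},d_{bottle})$, fix $f\in \mathcal{F}_{\text{Morse}}$ and set $C_0:=C(u,f)$, which is finite because $|\mathcal{B}'(f)|$ is finite and $u$ is continuous on the compact range of $f$. Given any $\varepsilon>0$, take $\delta=\varepsilon/C_0$. Then for any $h$ with $d_{bottle}(\mathcal{B}(f),\mathcal{B}(h))<\delta$, Lemma~\ref{lemma:semicon} immediately yields $\Phi_u(h)>\Phi_u(f)-\varepsilon$, which is precisely lower semi-continuity at $\mathcal{B}(f)$. The second assertion, on $(\mathcal{F}_{\text{Morse}},d_{C^0})$, follows by the identical argument using (\ref{LSC_Functions}) in place of (\ref{LSC_Barcode}); equivalently, it follows by pre-composing with the $1$-Lipschitz map $f\mapsto\mathcal{B}(f)$ provided by the stability bound (\ref{Stability}).

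I do not anticipate any real obstacle here: the substantive content is entirely absorbed into Lemma~\ref{lemma:semicon}, and the passage from a one-sided Lipschitz estimate to lower semi-continuity is routine. The only subtlety worth underlining is the asymmetry of the bound — the constant $C(u,f)$ is allowed to depend on, and indeed blow up with, the base point $f$, and in particular is not uniform on $\mathcal{F}_{\text{Morse}}$; this is harmless precisely because lower semi-continuity is tested one base point at a time.
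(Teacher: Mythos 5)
Your argument is correct and is exactly the paper's argument, merely spelled out in more detail: the paper invokes inequalities (\ref{LSC_Barcode}) and (\ref{LSC_Functions}) and notes that lower semi-continuity "easily follows," while you fill in the routine $\varepsilon$-$\delta$ step showing why a one-sided Lipschitz estimate with base-point-dependent constant yields $\liminf_{h\to f}\Phi_u(h)\ge\Phi_u(f)$. Your remark that $C(u,f)$ is allowed to blow up with $f$ because lower semi-continuity is a pointwise notion is a useful clarification, and the preliminary check that $\Phi_u$ descends to $\mathbf{B}$ is worth including even though the paper relegates it to a remark.
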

\begin{rem}
We slightly abuse the notation here by looking at $\Phi_u(f)=\Phi_u(\mathcal{B}(f))$ as the function of barcode $\mathcal{B}(f)$. However, it is obvious that $\Phi_u$ depends only on $\mathcal{B}(f)$ and not on $f$ itself. In the same spirit $\min f$ and $\max f$ should be replaced by the smallest and the largest endpoint of a bar in $\mathcal{B}(f).$
\end{rem}
\begin{proof} Recall that a functional $\Phi$ defined on a metric space $X$ is called lower semi-continuous at a point $f \in X$ if $\liminf_{h\to f} \Phi(h) \ge \Phi(f).$
This relation easily follows from the inequalities  (\ref{LSC_Barcode}) and (\ref{LSC_Functions}) for the functional $\Phi_u$ defined on the metric spaces $(\mathbf{B},d_{bottle})$ and  $(\mathcal{F}_{Morse},d_{C^0})$, respectively.
\end{proof}
The inequality  (\ref{LSC_Functions}) could be further strengthened. Let $\text{Diff}(M)$ denote the group of all smooth diffeomorphisms of the surface $M$ (throughout the paper, the term ``smooth''  stands for $C^{\infty}$--smooth).
\begin{cor}
\label{useful}
We have
\begin{equation}\label{LSC_Diffeo}
\Phi_u(f)-\Phi_u(h) \leq \  C(u,f) \cdot d_{C^0}(f \circ \varphi,h \circ \psi),
\end{equation}
for any two diffeomorphisms  $\varphi,\psi \in \text{Diff}(M).$
\end{cor}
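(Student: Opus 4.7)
The plan is to exploit the diffeomorphism invariance of barcodes so that Corollary \ref{useful} becomes an immediate consequence of inequality (\ref{LSC_Functions}) applied to the composed functions $f\circ\varphi$ and $h\circ\psi$. The whole proof is essentially a substitution once the invariance is established.

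First I would verify that composition with a diffeomorphism $\varphi\in\text{Diff}(M)$ preserves the class $\mathcal{F}_{\text{Morse}}$: since $\varphi$ sends $\partial M$ to $\partial M$, the function $f\circ\varphi$ still vanishes on $\partial M$ and is Morse on the interior whenever $f$ is. Next, observe that $\varphi$ restricts to a homeomorphism
$$\varphi:(f\circ\varphi)^{-1}((-\infty,t))\longrightarrow f^{-1}((-\infty,t))$$
for every $t\in\mathbb{R}$, and these homeomorphisms commute with the inclusions of sublevel sets as $t$ grows. By functoriality of homology, they therefore induce an isomorphism of persistence modules $V^t_k(f\circ\varphi)\cong V^t_k(f)$ in every degree $k$, commuting with the comparison maps $\pi_{st}$. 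The uniqueness part of the structure theorem then forces $\mathcal{B}(f\circ\varphi)=\mathcal{B}(f)$, and of course $\min(f\circ\varphi)=\min f$, $\max(f\circ\varphi)=\max f$ since $\varphi$ is a bijection of $M$.

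Inspecting the definition (\ref{Functional}) of $\Phi_u$ and the definition of $C(u,\cdot)$ in Lemma \ref{lemma:semicon}, both depend only on the barcode and the range of their argument, so the previous step yields the invariances
$$\Phi_u(f)=\Phi_u(f\circ\varphi),\qquad \Phi_u(h)=\Phi_u(h\circ\psi),\qquad C(u,f)=C(u,f\circ\varphi).$$
Applying (\ref{LSC_Functions}) to the pair $(f\circ\varphi,\,h\circ\psi)\in\mathcal{F}_{\text{Morse}}\times\mathcal{F}_{\text{Morse}}$ and substituting these equalities immediately produces (\ref{LSC_Diffeo}). There is no real obstacle; the one point worth spelling out is the diffeomorphism invariance of the degree-$k$ barcode, which, as indicated above, is just functoriality of singular homology applied to the sublevel-set homeomorphisms induced by $\varphi$.
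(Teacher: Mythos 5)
Your argument is correct and follows the same route as the paper: establish diffeomorphism invariance of barcodes (hence of $\Phi_u$ and $C(u,\cdot)$), then substitute $f\circ\varphi$ and $h\circ\psi$ into the earlier inequality (\ref{LSC_Functions})/(\ref{LSC_Barcode}) plus stability. You simply spell out the functoriality argument for $\mathcal{B}(f\circ\varphi)=\mathcal{B}(f)$ that the paper states without proof.
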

In particular, taking  $\varphi=\psi=\id_M$ gives (\ref{LSC_Functions}).
\begin{proof}
Indeed,  for any diffeomorphism $\varphi:M\rightarrow M$,  the  barcodes $\mathcal{B}(f)$ and $\mathcal{B}(f\circ \varphi)$ are the same. Since $\Phi_u$ depends only on the barcode and not on the function itself, putting $f\circ \varphi$ and $h \circ \psi$ in (\ref{LSC_Barcode}) yields \eqref{LSC_Diffeo}.
\end{proof}
Let us now extend the functional $\Phi_u$ from $\mathcal{F}_{Morse}$ to $C^0(M)$.   First, we introduce a ``cut-off version'' of $\Phi_u$.
 Define
\begin{equation}\label{Functionalk}
  \Phi_{u,k}(f)=\begin{cases}
     \int\limits_{\min f}^{\max f} u(t)~dt + \sum\limits_{i=1}^k \int\limits_{I_i} u(t)~dt  & \mbox{if}~ \partial M = \emptyset, \\
     \\
     \int\limits_{\min f}^{0} u(t)~dt + \sum\limits_{i=1}^k \int\limits_{I_i} u(t)~dt &  \mbox{if }\partial M \neq \emptyset.
  \end{cases}
\end{equation}
where $I_i\in \mathcal{B'}(f)$ are finite intervals ordered by integral of $u$, i.e. we have
$$\int\limits_{I_1}u(t)~dt\geq \int\limits_{I_2}u(t)~dt\geq \ldots$$
\begin{lem}
\label{Lipsch}
For every bounded function $u$ the functional  $\Phi_{u,k}$ is Lipschitz on  $\mathcal{F}_{\text{Morse}}$  with respect to $d_{bottle}$ with Lipschitz constant $(2k+2)\cdot \max u$ if $\partial M=\emptyset$ or with Lipschitz constant $(2k+1)\cdot \max u$ if $\partial M\neq\emptyset$.
\end{lem}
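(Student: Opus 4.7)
The plan is to fix $\epsilon > d_{bottle}(\mathcal{B}(f),\mathcal{B}(h))$, choose an $\epsilon$-matching $\mu$ between the two barcodes, and use it to compare the top $k$ bars of $f$ with the top $k$ bars of $h$. Since the bound must be symmetric, I would argue only for $\Phi_{u,k}(f) - \Phi_{u,k}(h)$ and exchange the roles to get the full Lipschitz inequality, finally letting $\epsilon \searrow d_{bottle}(\mathcal{B}(f),\mathcal{B}(h))$. It is convenient to extend $\mu$ to a ``complete'' matching by sending every unmatched bar to a phantom length-zero interval (contributing integral $0$); such bars necessarily have length $\leq 2\epsilon$ by the definition of an $\epsilon$-matching.

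For the boundary terms in \eqref{Functionalk} I would use that infinite bars are always matched (their length exceeds $2\epsilon$). In the closed case this gives $|\min f - \min h|\leq \epsilon$ and $|\max f - \max h|\leq \epsilon$, whence
\[
\Bigl|\int_{\min f}^{\max f} u(t)\,dt - \int_{\min h}^{\max h} u(t)\,dt\Bigr| \leq 2\epsilon \cdot \max u.
\]
In the boundary case only $\min f$ is involved, producing an error of at most $\epsilon\cdot \max u$. These account for the ``$+2$'' or ``$+1$'' in the claimed Lipschitz constants.

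For the finite-bar part, let $I_1(f),\dots,I_k(f)$ be the top $k$ bars of $f$ ordered by $\int_{I} u$, and set $J_i := \mu(I_i(f))$. Each pair $(I_i(f), J_i)$ falls into one of two cases: either $J_i$ is a genuine finite bar of $h$ whose endpoints are $\epsilon$-close to those of $I_i(f)$, in which case $\bigl|\int_{I_i(f)} u - \int_{J_i} u\bigr| \leq 2\epsilon \max u$; or $J_i$ is the phantom, in which case $I_i(f)$ has length at most $2\epsilon$, so $\int_{I_i(f)} u \leq 2\epsilon \max u = \int_{I_i(f)}u - 0$. Summing over $i$ yields
\[
\sum_{i=1}^{k}\int_{I_i(f)} u(t)\,dt \;\leq\; \sum_{i=1}^{k}\int_{J_i} u(t)\,dt \;+\; 2k\epsilon \cdot \max u.
\]
The non-phantom $J_i$'s form a set of at most $k$ distinct finite bars of $h$, because $\mu$ is injective where defined. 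Since the top $k$ finite bars of $h$ maximize $\int u$ among any $k$ finite bars of $h$, the first sum on the right is bounded by $\sum_{i=1}^k \int_{I_i(h)} u(t)\,dt$.

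Adding the two contributions gives $\Phi_{u,k}(f) - \Phi_{u,k}(h) \leq (2k+2)\epsilon \cdot \max u$ (respectively $(2k+1)\epsilon\cdot\max u$ with boundary), and swapping $f,h$ yields the matching lower bound. There is no real obstacle here; the only place that requires minor care is verifying that unmatched top-$k$ bars of $f$ and their phantom images are accounted for consistently with the bijectivity of $\mu$, so that one does not double-count or overcount bars of $h$ when comparing to $\Phi_{u,k}(h)$.
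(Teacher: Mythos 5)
Your proof is correct and follows essentially the same route as the paper's: fix an $\epsilon$-matching (completed by phantom zero-length bars), bound the range term by $2\epsilon\max u$ (or $\epsilon\max u$) using the matched infinite bars, bound each of the top $k$ finite bars against its image with error $2\epsilon\max u$, and then use that the top $k$ finite bars of $h$ dominate any choice of $k$ finite bars of $h$ to compare $\sum_i\int_{\mu(I_i)}u$ with $\Phi_{u,k}(h)$'s finite-bar sum. The only cosmetic difference is that the paper opens by assuming $\Phi_{u,k}(f)\ge\Phi_{u,k}(h)$ rather than symmetrizing at the end, but the core inequality and the justification are identical.
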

The proof of Lemma \ref{Lipsch} is given in subsection \ref{lemmaproof}.

\smallskip

 Assume now that $f\in C^0(M)$ is  an abritrary continuous function on $M$.  Let $f_n\in \mathcal{F}_{Morse}$ such that $d_{C^0}(f,f_n)\to 0$ as $n\to \infty$. Set
\begin{equation}
\label{nonmorsefunct}
\Phi_u(f):=\lim_{k\to\infty}\lim_{n\to\infty} \Phi_{u,k}(f_n)
\end{equation}
Note that $\Phi_u(f_n)$ only depends on $u|_{[\min f_n, \max f_n]}$ and, since for sufficiently large $n$ it holds $[\min f_n, \max f_n ] \subset [\min f -1, \max f+1]$, we may restrict ourselves to this interval and argue as if $u$ was bounded. Thus due to Lemma \ref{Lipsch} and \eqref{Stability}, the double limit on the left hand side of \eqref{nonmorsefunct} (which could be equal to $+\infty$) does not depend on the choice of the approximating sequence
$f_n$. Therefore, the functional $\Phi_u(f)$ is well defined by \eqref{nonmorsefunct}.  Moreover, it is easy to check that the right-hand sides of \eqref{nonmorsefunct} and \eqref{Functional} coincide for $f\in \mathcal{F}_{Morse}$, and therefore \eqref{nonmorsefunct} indeed defines an extension of \eqref{Functional} to $C^0(M)$.

\subsection{Main results}\label{subsec-mr} As before, $M$ is an orientable surface, possibly with boundary, equipped with a Riemannian metric $g$. Denote by $\| \cdot \|$ the $L^2$-norm with respect to Riemannian area $\sigma$ and by $\Delta$ the Laplace-Beltrami operator with respect to $g$. Slightly abusing the notation, throughout the paper $\kappa_g$ will denote various constants depending only on the Riemannian metric $g$.

Following\footnote[4]{Our definition is slightly different from the one in \cite{PolSod} since we do not assume that $\int_{M} f~\sigma=0$ if $M$ has no boundary. However, this assumption is not needed for any of the results of \cite{PolSod} which we use.} \cite{PolSod}, denote by $\mathcal{F}_\lambda$  the set of all smooth functions on $M$ (vanishing on the boundary if $\partial M \neq 0$) which satisfy $\| f \|=1$ and $\| \Delta f \| \leq \lambda.$ One may check that $\mathcal{F}_\lambda$ contains normalized linear combinations of eigenfunctions of $\Delta$ with eigenvalues $\lambda_i \leq \lambda$. If  $\partial M \neq 0$,  $\mathcal{F}_\lambda$ contains also normalized eigenfunctions of the biharmonic clamped plate boundary value problem on $M$
(see \cite[Example 1.2]{PolSod}). Our main result is the following theorem.
\begin{thrm}
\label{Persistence_Bound}
Let $\lambda>0$ be any positive real number, $u\in C(\mathbb{R})$ be a non-negative function and $f\in \mathcal{F}_\lambda$ be a function on an orientable surface $(M,g)$.
Then there exists a constant $\kappa_g>0$ such that
\begin{equation}
\label{mainbound}
\Phi_u(f) \leq \kappa_g(\lambda+1)  \| u\circ f\|.
\end{equation}
\end{thrm}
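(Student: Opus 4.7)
The plan is to combine the two reductions flagged in the introduction: first, use Proposition \ref{Bound_Topological} to bound $\Phi_u(f)$ by a weighted integral of the Banach indicatrix, and then quote the estimate of \cite{PolSod} on that integral. I would organise the argument in three steps.

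\emph{Step 1: reduction to the Morse case.} Since $f \in \mathcal{F}_\lambda$ is smooth but not a priori Morse, I would approximate it in $C^2(M)$ by Morse functions $f_n$ with $\|f_n\|=1$ and $\|\Delta f_n\| \leq \lambda + \varepsilon_n$, $\varepsilon_n \to 0$. Uniform convergence guarantees $\|u \circ f_n\| \to \|u \circ f\|$ and, via the defining double limit \eqref{nonmorsefunct} together with $\Phi_{u,k}(f_n) \leq \Phi_u(f_n)$, it will suffice to prove the bound for Morse $f_n \in \mathcal{F}_{\lambda+\varepsilon_n}$; the extra ``$+1$'' in the target factor $(\lambda+1)$ absorbs the perturbation $\varepsilon_n$.

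\emph{Step 2: from barcodes to the Banach indicatrix.} I would apply Proposition \ref{Bound_Topological}, whose content I envisage as a pointwise inequality
\[
\sum_{I \in \mathcal{B}'(f)} \mathbf{1}_I(t) \; \leq \; C \cdot b(f,t)
\]
for almost every $t$, where $b(f,t)$ is the Banach indicatrix, namely the number of connected components of the level set $f^{-1}(t)$. The underlying Morse-theoretic picture on the orientable surface is that each finite bar in degree $0$ is born at a local minimum and dies at a saddle through the merger of two sublevel components, so that for every regular $t$ strictly between these endpoints the boundary circle of the ``younger'' sublevel component is a bona fide connected component of $f^{-1}(t)$; finite bars in degree $1$ are handled symmetrically by running the same picture for $-f$ and invoking Poincar\'e--Lefschetz duality. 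Integrating against $u$, and using the trivial lower bound $b(f,t) \geq 1$ on the interior of the range of $f$ to absorb the first summand in \eqref{Functional}, then gives
\[
\Phi_u(f) \; \leq \; C' \int_{\mathbb{R}} u(t)\, b(f,t) \, dt.
\]

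\emph{Step 3: Banach indicatrix estimate and conclusion.} I would finally invoke the indicatrix bound of \cite{PolSod}, which asserts that for $f \in \mathcal{F}_\lambda$ on $(M,g)$,
\[
\int_{\mathbb{R}} u(t)\, b(f,t)\, dt \; \leq \; \kappa_g (\lambda+1) \|u \circ f\|.
\]
Chaining this with the previous display and passing to the limit along the Morse approximants of Step 1 finishes the proof. The main obstacle I anticipate is the careful implementation of Step 2: one must arrange a \emph{disjoint} assignment of degree-$0$ and degree-$1$ finite bars to distinct level-set components of $f^{-1}(t)$, so that the constant $C$ is genuinely absolute and no component is double-counted as $t$ varies. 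By contrast, Step 3 is a direct citation, and Step 1 is routine once one restricts $u$ to a compact interval containing $[\min f_n, \max f_n]$ for all large $n$ and runs a dominated-convergence argument to get $\|u\circ f_n\|\to\|u\circ f\|$.
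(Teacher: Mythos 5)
Your proposal is correct and follows essentially the same route as the paper: reduce to the Morse case by approximation in $\mathcal{F}_{\lambda+\epsilon}$, bound $\Phi_u$ by an integral of the Banach indicatrix via Proposition~\ref{Bound_Topological}, and then invoke the estimate of \cite{PolSod}. The only cosmetic difference is that you phrase the topological step with a loose constant $C$ and absorb the range term separately via $b(f,t)\ge 1$, whereas Proposition~\ref{Bound_Topological} already delivers the pointwise inequality $\chi_{(\min f,\max f]}(t) + \sum_{I\in\mathcal{B}'(f)}\chi_I(t) \le \beta(t,f)$ with constant $1$; the ``disjoint assignment'' issue you flag is exactly what its Mayer--Vietoris proof resolves, but since you quote the proposition rather than prove it, this does not affect the argument.
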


\smallskip

In order to prove this theorem we compare both sides of inequality (\ref{mainbound}) with an intermediate quantity. Let $\beta(t,f)$ be the number of connected components of $f^{-1}(t)$. Function $\beta(t,f)$ is called the \textit{Banach indicatrix} of $f$ (see \cite{Kron, Yom}). In \cite{PolSod} it was proved that $\int_{-\infty}^{+\infty} u(t)\beta(t,f)dt\leq \kappa_g(\lambda+1)  \| u\circ f\|$ for $f\in \mathcal{F}_\lambda.$ On the other hand, we show that $\Phi_u(f) \leq \int_{-\infty}^{+\infty} u(t)\beta(t,f)dt$, see Proposition \ref{Bound_Topological}.  This proposition, which is of topological nature, constitutes the main technical result of the paper.

\medskip

Now, notice that taking $u\equiv 1$ in (\ref{mainbound}) we get the following corollary:
\begin{cor}\label{cor:ubound}
Let $(M,g)$ be an orientable surface without boundary and let $f \in \mathcal{F}_\lambda$ be a Morse function on $M$. Denote by $l_i$ the lengths of the finite bars of the barcode associated with $f$. Then
\begin{equation}
\label{Exm:Barcode}
\max f - \min f +\sum_i l_i \leq \kappa_g(\lambda+1).
\end{equation}
\end{cor}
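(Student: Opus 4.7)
The plan is very short: this corollary is essentially the specialization of Theorem \ref{Persistence_Bound} to the constant weight $u\equiv 1$ under the assumption $\partial M = \emptyset$, so the work is almost entirely bookkeeping. First I would unpack the definition \eqref{Functional} of $\Phi_u$ in the case $\partial M=\emptyset$, $u\equiv 1$:
\begin{equation*}
\Phi_1(f) = \int_{\min f}^{\max f} 1 \, dt + \sum_{I\in \mathcal{B}'(f)} \int_I 1\, dt = (\max f - \min f) + \sum_i l_i,
\end{equation*}
since the integral of $1$ over a finite bar $I$ is exactly its length $l_i$. Thus the left-hand side of \eqref{Exm:Barcode} is literally $\Phi_1(f)$.

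Next I would apply Theorem \ref{Persistence_Bound} with $u\equiv 1$ to obtain
\begin{equation*}
\Phi_1(f) \leq \kappa_g(\lambda+1) \, \|1\circ f\| = \kappa_g(\lambda+1) \, \|1\|_{L^2(M,\sigma)} = \kappa_g(\lambda+1)\sqrt{\sigma(M)}.
\end{equation*}
Finally, since $\sigma(M)$ is a constant depending only on the Riemannian metric $g$, the factor $\sqrt{\sigma(M)}$ can be absorbed into the metric-dependent constant $\kappa_g$ (exploiting the convention, stated in subsection \ref{subsec-mr}, that $\kappa_g$ denotes \emph{various} constants depending only on $g$). Combining the two displayed equations then yields \eqref{Exm:Barcode}.

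There is no genuine obstacle here: the only thing to verify is that the formula for $\Phi_1$ agrees with the expression on the left-hand side of \eqref{Exm:Barcode}, which is immediate from the definition. If desired, one could also remark that the Morse hypothesis ensures $\mathcal{B}(f)$ is finite so the sum $\sum_i l_i$ is well-defined, matching the setting in which \eqref{Functional} was originally introduced.
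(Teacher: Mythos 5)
Your proof is correct and matches the paper's approach exactly: the paper derives the corollary by simply taking $u\equiv 1$ in Theorem \ref{Persistence_Bound}, and you carry out the same specialization, additionally making explicit the small observation (glossed over in the paper) that $\|1\circ f\|=\sqrt{\sigma(M)}$ is a metric-dependent constant absorbed into $\kappa_g$.
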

\begin{exm}
\label{exampletorus}
The order of $\lambda$ in inequality \eqref{Exm:Barcode} is sharp. Indeed, consider the flat square torus $\mathbb{T}^2=\mathbb{R}^2 /(2\pi \cdot \mathbb{Z})^2.$ We have a sequence $f_n(x,y)=\frac{1}{\pi}\sin (nx)\cos(ny)$, $n\in \mathbb{N}$ of eigenfunctions of $\Delta$  with eigenvalues $2n^2$. By analysing critical points of $f_1=\sin x \cos x$ and using periodicity, one can compute that the full barcode of $f_n$ contains
\begin{itemize}
  \item An infinite bar $(-\frac{1}{\pi},+\infty)$ and $2n^2-1$ copies of finite bar $(-\frac{1}{\pi},0]$ in degree 0;
  \item Two copies of infinite bar $(0,+\infty)$ and $2n^2-1$ copies of finite bar $(0,\frac{1}{\pi}]$ in degree 1;
  \item An infinite bar $(\frac{1}{\pi},+\infty)$ in degree 2.
\end{itemize}
Putting these values in inequality \eqref{Exm:Barcode}  gives us
$$\frac{4}{\pi}n^2 \leq \kappa_g(2n^2+1),$$
which proves that the order of $\lambda$ in \eqref{Exm:Barcode}  is sharp.
\end{exm}
In order to present another application of Theorem \ref{Persistence_Bound} we need the following definition.
\begin{defn}  Let $f:M \rightarrow \mathbb{R}$ be a Morse function on a differentiable manifold $M$ and let $\delta>0$. We say that a critical value $\alpha \in \mathbb{R}$ of the function $f$ is a {\it $\delta$-significant critical value of multiplicity $m$} if the barcode of $f$ contains $m$ bars of length at least $\delta$ having $\alpha$ as one of the endpoints.
\end{defn}
Given $\delta>0$ and a Morse function $f$, let $\mathcal{N}_\delta(f)$ be the number of $\delta$-significant critical values counted with multiplicities.
Theorem \ref{Persistence_Bound} then immediately implies:
\begin{cor}
\label{deltasign}
Let  $(M,g)$ be an orientable surface, possibly with boundary,  and let $f \in \mathcal{F}_\lambda$ be a Morse function on $M$. Then
\begin{equation}
\label{critvalues}
\mathcal{N}_\delta(f) \le \kappa_{g,\delta} (\lambda+1)
\end{equation}
for any $\delta>0$.
\end{cor}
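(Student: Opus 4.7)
The plan is to bound $\mathcal{N}_\delta(f)$ by counting (bar, endpoint) incidences for bars of length at least $\delta$ and then control the number of such long bars using Theorem \ref{Persistence_Bound} with $u\equiv 1$.

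First, I rewrite $\mathcal{N}_\delta(f)$ in terms of the barcode. By the definition of $\delta$-significance, for each $\delta$-significant critical value $\alpha$ the multiplicity $m(\alpha)$ equals the number of bars of length $\geq \delta$ that have $\alpha$ as an endpoint. Summing over $\alpha$ counts each bar of length $\geq \delta$ once for each of its (finitely many) endpoints. Hence
\[
\mathcal{N}_\delta(f) \;=\; 2\,N^{\geq \delta}_{\text{fin}}(f) \;+\; N^{\geq \delta}_{\text{inf}}(f),
\]
where $N^{\geq \delta}_{\text{fin}}(f)$ and $N^{\geq \delta}_{\text{inf}}(f)$ denote the number of finite, respectively infinite, bars of length at least $\delta$ in $\mathcal{B}(f)$.

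Next I would handle the infinite bars trivially: their total number is $b_0(M)+b_1(M)+b_2(M)$, which is a constant depending only on the topology of $M$ and can be absorbed into $\kappa_{g,\delta}$.

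For the finite bars, apply Theorem \ref{Persistence_Bound} to the function $u\equiv 1$. Since $\|1\circ f\|=\sqrt{\sigma(M)}$ is a constant depending only on $g$, the theorem yields
\[
\Phi_1(f) \;\leq\; \kappa_g(\lambda+1)\sqrt{\sigma(M)}.
\]
On the other hand, by the very definition \eqref{Functional}, $\Phi_1(f)$ is bounded below by the sum of the lengths of all finite bars, which in turn is at least $\delta\cdot N^{\geq \delta}_{\text{fin}}(f)$. Combining gives
\[
N^{\geq \delta}_{\text{fin}}(f) \;\leq\; \frac{\kappa_g\sqrt{\sigma(M)}}{\delta}\,(\lambda+1).
\]
Plugging this and the bound on $N^{\geq \delta}_{\text{inf}}(f)$ into the identity for $\mathcal{N}_\delta(f)$ produces \eqref{critvalues} with an appropriate $\kappa_{g,\delta}$.

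There is essentially no technical obstacle: the corollary is a direct computation, and the only mild subtlety is the first step, where one must be careful that endpoints of distinct bars coinciding at the same critical value contribute separately to the multiplicity, which is exactly what the $\delta$-significance definition requires.
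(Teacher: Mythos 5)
Your proof is correct and is in the spirit of the paper, which simply asserts that the corollary "immediately" follows from Theorem \ref{Persistence_Bound}; you have supplied the routine details. You specialize the theorem to $u\equiv 1$, observe $\|u\circ f\|=\sqrt{\sigma(M)}$, bound the number of long finite bars by $\Phi_1(f)/\delta$, and absorb the topologically bounded count of infinite bars into the constant (noting $\lambda+1\ge 1$).

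One tiny remark: in the case $\partial M\neq\emptyset$ the identity $\mathcal{N}_\delta(f)=2N^{\ge\delta}_{\text{fin}}+N^{\ge\delta}_{\text{inf}}$ should really be an inequality $\le$, since some bar endpoints may equal the boundary value $0$, which need not be a critical value of $f$ and so is not counted in $\mathcal{N}_\delta$. This only strengthens the estimate and does not affect the conclusion.
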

The following example shows that the $\delta$-significance condition for some $\delta>0$ is essential in Corollary \ref{deltasign}. For simplicity, we present it in one dimension, but it could be easily generalized to any dimension.
\begin{exm}
Let $M=\mathbb{S}^1$ be a unit circle and let $N_i$ be {\it any} sequence of natural numbers  tending to infinity.
Consider a  sequence of functions on $M$:
$$f_i(x)=\frac{1}{\sqrt{\pi(2+N_i^{-4})}} \left(1+\frac{1}{N_i^2} \sin(N_i x)\right).$$
It is easy to check that $||f_i||_{L^2(M)}=1$ and $f_i \in \mathcal{F}_\lambda$, $\lambda\ge \frac{1}{\sqrt{2}}$, for all $i=1,2,\dots$. At the same time, the number of critical points, and hence of critical values (counted with multiplicities) is equal to $N_i$, which goes to infinity and hence can not be controlled by $\lambda$.  Note, however,  that for any $\delta>0$,  the number of $\delta$-significant critical values is bounded as $i \to \infty$.
\end{exm}
Estimate \eqref{critvalues} could be also compared to \cite[Theorem 1.1]{Nic}, which shows that the expected value of the number of critical points of a random linear combination of Laplace eigenfunctions $f_1,\dots, f_m$ on a Riemannian manifold satisfies an asymptotic expansion with the leading term of order $m$. Due to Weyl's law, for surfaces this is equivalent to having  the number of critical points of order $\lambda_m$, which agrees with inequality \eqref{critvalues}. Inspired in part by this observation, we propose the following generalization of
\eqref{critvalues} to Riemannian manifolds of arbitrary dimension:
\begin{conj}
\label{higherdimconj_Weak}
Let $(M,g)$ be a Riemannian manifold of dimension $n$, possibly with boundary, and let $f$ be a $L^2$-normalized linear combination of eigenfunctions of $\Delta$ with eigenvalues $\lambda_i \leq \lambda.$ In addition, assume that $f$ is Morse. Then
\begin{equation}
\label{inequality_Weak}
\mathcal{N}_\delta(f) \le \kappa_{g,\delta} (\lambda+1)^{\frac{n}{2}}
\end{equation}
for any $\delta>0$.
\end{conj}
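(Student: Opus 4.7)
The plan is to emulate the two-step proof of Theorem \ref{Persistence_Bound}. Since a $\delta$-significant critical value of multiplicity $m$ accounts for $m$ bars of length at least $\delta$ in $\mathcal{B}(f)$ and each bar has at most two endpoints, the number of $\delta$-significant critical values counted with multiplicity is bounded by twice the number of long bars, which is in turn at most $(\Phi_1(f)+C_M)/\delta$, where $C_M$ depends only on the topology of $M$ (accounting for the infinite bars). Hence it suffices to prove $\Phi_1(f) \leq \kappa_g(\lambda+1)^{n/2}$ for $f \in \mathcal{F}_\lambda$. The strategy is then: (i) to bound $\Phi_1(f)$ from above by the integral over $t$ of a suitable topological complexity of the regular level sets $f^{-1}(t)$, generalizing Proposition \ref{Bound_Topological}; and (ii) to bound this integral by $\kappa_g(\lambda+1)^{n/2}$, generalizing the Banach indicatrix estimate from \cite{PolSod}.

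For step (i), the natural higher-dimensional substitute for the Banach indicatrix $\beta(t,f)$ is the total Betti number $\mathcal{S}(t,f) := \sum_k \dim H_k(f^{-1}(t);\mathbb{R})$ of a regular level set. Any finite bar $(a,b]$ in degree $k$ of $\mathcal{B}(f)$ should be witnessed, for each regular $t \in (a,b)$, by a nonzero class in either $H_k(f^{-1}(t))$ or $H_{k-1}(f^{-1}(t))$: the interval representative gives a class in $H_k$ of the sublevel set at height $t$ that is killed when one crosses the upper endpoint $b$, and the connecting homomorphism of the long exact sequence of the pair $\bigl(f^{-1}((-\infty,t+\varepsilon)),f^{-1}((-\infty,t-\varepsilon))\bigr)$, combined with excision, transfers this to a nonzero class on the level set. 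Bookkeeping multiplicities as in Remark \ref{Critical_Points}, this should yield an estimate of the shape
\[
\Phi_1(f) \leq C_M + \int_{\min f}^{\max f} \mathcal{S}(t,f)\, dt.
\]

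For step (ii) one needs $\int \mathcal{S}(t,f)\, dt \leq \kappa_g(\lambda+1)^{n/2}$ for every $f \in \mathcal{F}_\lambda$. A promising route is integral-geometric slicing: pick a smooth measured family $\{\Sigma_\xi\}$ of generic $2$-dimensional submanifolds of $M$ together with a Crofton-type identity expressing $\int \mathcal{S}(t,f)\, dt$ in terms of the surface quantities $\int \beta(t, f|_{\Sigma_\xi})\, dt$ integrated against $d\xi$, and then apply the two-dimensional result of \cite{PolSod} slicewise. A complementary, semiclassical alternative would bound $\mathcal{S}(t,f)$ by the Minkowski content of a near-critical shell of $\{f=t\}$ via $C^2$-norms of $f$ and Sogge-type spectral bounds.

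The main obstacle lies in step (ii). The restriction $f|_{\Sigma_\xi}$ has the correct $L^2$-norm, but $\|\Delta_{\Sigma_\xi}(f|_{\Sigma_\xi})\|_{L^2(\Sigma_\xi)}$ is \emph{not} controlled by $\lambda \cdot \|f|_{\Sigma_\xi}\|_{L^2(\Sigma_\xi)}$: the intrinsic Laplacian on $\Sigma_\xi$ differs from the restricted ambient Laplacian by second-fundamental-form terms involving normal derivatives of $f$, and any $L^2$ trace inequality loses half a Sobolev derivative that is only recovered at the cost of an extra $\lambda^{1/4}$ factor. Consequently $f|_{\Sigma_\xi}$ is not a priori in $\mathcal{F}_{\kappa_g \lambda}(\Sigma_\xi)$, and absorbing these losses into an averaged Crofton bound while preserving the sharp exponent $n/2$ --- rather than a weaker one of the form $n/2 + \varepsilon$ --- will require genuinely new input, perhaps a Laplace-adapted integral-geometric identity in the spirit of spectral Crofton formulas. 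The semiclassical alternative faces a dual difficulty: Yomdin-type topological bounds by $C^k$-norms typically lose polynomial factors in $k$ that do not obviously match the conjectured exponent. This is why I expect the conjecture to lie genuinely deeper than its two-dimensional counterpart.
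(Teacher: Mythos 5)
This statement is a \emph{conjecture}, not a theorem: the paper offers no proof of it. The authors sketch a possible route in Remark~\ref{Yomdin}, which coincides exactly with your step (i) --- bounding $\Phi_u$ by an integral of the total Betti number $\sum_i b_i(f^{-1}(t))$ of level sets --- and they explicitly state that step (ii), a higher-dimensional analogue of the Banach-indicatrix estimate from \cite{PolSod}, is what would be ``needed to establish Conjecture~\ref{higherdimconj}.'' So your overall strategy is the one the authors have in mind, and your honest assessment of the obstacle in step (ii) matches their framing of the problem as open. There is therefore no ``paper's proof'' to compare yours against; you have identified the right problem and the right bottleneck.

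However, one concrete error undermines the first paragraph of your proposal. You reduce the conjecture to the claim that $\Phi_1(f)\le\kappa_g(\lambda+1)^{n/2}$ for all $f\in\mathcal{F}_\lambda$, but Buhovsky's Example~\ref{example:buh} in the paper shows precisely this to be \emph{false} for $n\ge 3$ (and a modification is a counterexample to the $\mathcal{N}_\delta$ bound itself for $n\ge 5$ when working over $\mathcal{F}_\lambda$). This is why Conjectures~\ref{higherdimconj_Weak} and~\ref{higherdimconj} are formulated for genuine $L^2$-normalized linear combinations of eigenfunctions with $\lambda_i\le\lambda$, not for the much larger class $\mathcal{F}_\lambda$; the paper even emphasizes this point after Example~\ref{example:buh}, and the acknowledgments note that Buhovsky's example ``has lead to a reformulation'' of these conjectures. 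Your reduction never uses the extra rigidity of actual eigenfunction combinations, so as written it would also ``prove'' the known-false $\mathcal{F}_\lambda$ statement. Any viable attack must exploit, somewhere, the fact that $f$ lies in the span of low eigenmodes (for instance, Bernstein/Nikolskii-type estimates on all derivatives, or quasi-analyticity of such spectral projections), and not merely the two $L^2$ bounds $\|f\|=1$, $\|\Delta f\|\le\lambda$. With that caveat, the Crofton-slicing and semiclassical directions you outline remain reasonable places to look, and the loss-of-derivative issue you flag for restrictions to slices is indeed the crux: for a genuine eigenfunction combination the normal derivatives are also spectrally controlled, which is exactly the sort of input absent for general elements of $\mathcal{F}_\lambda$ and is what Buhovsky's bump-function construction exploits.
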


Furthermore, for $n$-dimensional Riemannian manifolds, consider the following generalization of the functional $\Phi_u$: it is defined for Morse functions by an analogue of \eqref{Functional}, the sum being taken over all finite bars in $\mathcal{B}(f)$  in all  degrees. Similarly to \eqref{nonmorsefunct} it also could be extended to arbitrary functions in $C^0(M)$.
\begin{conj}
\label{higherdimconj}
Let  $u\in C(\mathbb{R})$ be a non-negative function and $f$ a $L^2$-normalized linear combination of eigenfunctions of $\Delta$ with eigenvalues $\lambda_i \leq \lambda$ on a Riemannian manifold $(M,g)$.
Then there exists a constant $\kappa_g>0$ such that for any $\lambda>0$,
\begin{equation}\label{inequality_Strong}
{\Phi}_u(f) \leq \kappa_g(\lambda+1)^{\frac{n}{2}} \| u\circ f\|.
\end{equation}
\end{conj}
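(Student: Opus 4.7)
The plan mirrors the two-step strategy behind Theorem \ref{Persistence_Bound}: interpolate between $\Phi_u(f)$ and $\|u\circ f\|$ through a higher-dimensional analogue of the Banach indicatrix. The first step is purely topological (Morse-theoretic), and the second step uses the spectral hypothesis $\|\Delta f\|\le \lambda$.

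For the topological step, I would introduce
$$\widetilde{\beta}(t,f) \;:=\; \sum_{k=0}^{n-1} \dim H_k\bigl(f^{-1}(t);\mathbb{R}\bigr),$$
defined for regular values $t$ of the Morse function $f$, and seek to generalize Proposition \ref{Bound_Topological} to
$$\Phi_u(f) \;\le\; \kappa_n \int_{\mathbb{R}} u(t)\bigl(\widetilde{\beta}(t,f)+1\bigr)\,dt.$$
By Fubini, $\sum_{I\in\mathcal{B}'(f)}\int_I u = \int_{\mathbb{R}} u(t)\, N(t,f)\,dt$, where $N(t,f)$ counts finite bars (in all degrees) whose interior contains $t$. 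The pointwise estimate $N(t,f) \le \kappa_n\, \widetilde{\beta}(t,f)$ should follow from the long exact sequence of the pair $(f^{-1}((-\infty,t]), f^{-1}(t))$: each finite bar passing over a regular value $t$ is represented by a cycle in $f^{-1}((-\infty,t))$ that either dies later or carries a nontrivial restriction/connecting class into $H_*(f^{-1}(t))$, and such classes are in turn controlled by $\widetilde{\beta}(t,f)$.

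For the analytic step, the target inequality is
$$\int_{\mathbb{R}} u(t)\, \widetilde{\beta}(t,f)\,dt \;\le\; \kappa_g(\lambda+1)^{n/2}\,\|u\circ f\|.$$
In the surface case $n=2$ this reduces to the Banach indicatrix bound of \cite{PolSod}, whose essential ingredients are the integration-by-parts identity $\int|\nabla f|^2 = \int f\,\Delta f$ together with the coarea formula $\int_M (u\circ f)|\nabla f|\, d\sigma = \int_\mathbb{R} u(t)\,\operatorname{vol}_{n-1}(f^{-1}(t))\,dt$ and a Cauchy--Schwarz step. Generalizing to $n\ge 3$ requires replacing the $(n-1)$-volume of level sets by the Betti count $\widetilde{\beta}(t,f)$; on model manifolds such as $\mathbb{S}^n$ or flat tori, a Milnor--Thom-type bound yields the pointwise estimate $\widetilde{\beta}(t,f) \le \kappa_g(\lambda+1)^{(n-1)/2}$, with $\sqrt{\lambda}$ playing the role of polynomial degree, and the remaining coarea/integration argument then closes the loop.

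The analytic step is the principal obstacle. A pointwise Betti bound of the form $\widetilde{\beta}(t,f) \le \kappa_g(\lambda+1)^{(n-1)/2}$ on a general $(M,g)$ is strictly stronger than known bounds on the top Betti number of nodal sets and is closely tied to Yau's nodal-volume conjecture; outside the real-algebraic regime it seems to lie beyond currently available heat-kernel or doubling-index techniques. A more modest intermediate goal is to first establish Conjecture \ref{higherdimconj_Weak} on $\delta$-significant critical values, where fine control of level-set topology can be replaced by a combination of Weyl's law, $C^1$-stability of barcodes, and a Yomdin-type count applied to the graph of $f$ in $M\times\mathbb{R}$. Progress on the weak conjecture would already extend the approximation-theoretic applications of the surface case to arbitrary dimension and provide a testing ground for techniques aimed at \eqref{inequality_Strong}.
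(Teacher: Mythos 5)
This statement is explicitly a \emph{conjecture}; the paper does not prove it, and your proposal --- correctly --- does not claim a proof either. Your two-step plan matches almost verbatim the authors' own suggested roadmap in Remark~\ref{Yomdin}: bound $\Phi_u(f)$ topologically by $\int u(t)\big(\sum_{i=0}^{n-2} b_i(f^{-1}(t))\big)\,dt$, then bound this integral via a higher-dimensional, $L^2$-flavored analogue of the Banach-indicatrix estimate of \cite[Theorem~1.5]{PolSod} together with Yomdin-type Betti-number bounds. (Your $\widetilde{\beta}$ sums up to degree $n-1$ rather than $n-2$; for closed orientable $(n-1)$-dimensional level sets this differs only by constants via Poincar\'e duality, so it is cosmetic.) You also correctly identify the analytic step as the principal obstruction, which is consistent with the authors leaving the statement as a conjecture.

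The one substantive point your outline glosses over concerns \emph{which class of functions} the analytic step must apply to. You frame it as a higher-dimensional generalization of \cite[Theorem~1.5]{PolSod}, i.e.\ an estimate valid for the full class $\mathcal{F}_\lambda$ of functions with $\|f\|=1$ and $\|\Delta f\|\le\lambda$. But the paper's Example~\ref{example:buh} (Buhovsky) shows precisely that no such bound can hold on $\mathcal{F}_\lambda$ when $n\ge 3$: one produces $F_k$ with $\|F_k\|$ and $\|\Delta F_k\|$ bounded away from zero and infinity while $\Phi_1(F_k)$ grows like $k^{n-2}$. This is exactly why Conjecture~\ref{higherdimconj} is stated for genuine finite linear combinations of eigenfunctions rather than for $\mathcal{F}_\lambda$. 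Any viable analytic step must therefore use the spectral-localization hypothesis in a way strictly stronger than $L^2$ control of $\Delta f$ --- e.g.\ via real-analyticity, doubling-index arguments, or a Milnor--Thom-type count treating $\sqrt{\lambda}$ as a polynomial degree, as you hint at. Your proposal would be tighter if it flagged this explicitly as the point where the linear-combination hypothesis becomes indispensable, rather than presenting the target estimate as a direct extension of the indicatrix bound that works on all of $\mathcal{F}_\lambda$ in the surface case.
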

A possible approach to proving this conjecture is discussed in Remark \ref{Yomdin}.
\begin{exm}\label{rem:variation}
In order to provide intuition about Conjecture \ref{higherdimconj}, let us examine what happens in dimension one (cf. \cite[p. 137]{CSEHM}). In this case, the notions coming from the barcode,  such as the number or the total length of finite bars, have transparent meanings.  Assume that
$(M,g)=(\mathbb{S}^1,g_0)=(\mathbb{R}/(2\pi \cdot \mathbb{Z}),g_0)$ is the circle with the metric inherited from the standard length on $\mathbb{R}$,  and $f:\mathbb{S}^1 \rightarrow \mathbb{R}$ is a Morse function. Since $f$ is Morse, all critical points of $f$ are either local minima or local maxima and they are located on $\mathbb{S}^1$ in an alternating fashion. More precisely, if there are $N$ local minima $x_1,\ldots, x_N$,  there are also $N$ local maxima $y_1,\ldots, y_N$,  and we may label them so that they are cyclically ordered as follows:
$$x_1, y_1, x_2, y_2, \ldots, x_N,  y_N, x_1.$$
Taking $u\equiv 1,$ we have that $\Phi_1(f)=\max f - \min f + \text{the total length of finite bars}.$ All the finite bars appear in degree 0,  and thus by Remark \ref{Critical_Points} we have $N$ finite bars whose left endpoints are $f(x_1),\ldots , f(x_N)$ and whose right endpoints are $f(y_1),\ldots , f(y_N).$ From here it follows that
$$\Phi_1(f)=\sum_{i=1}^{N}(f(y_i)-f(x_i)).$$
On the other hand,  the total variation of $f$ satisfies
$$\text{Var}(f)=2 \sum_{i=1}^{N}(f(y_i)-f(x_i)) = 2\Phi_1(f).$$
Furthermore, using H\"older's inequality and partial integration we have
$$\text{Var}(f)=\int_{0}^{2\pi}|f'(t)|dt\leq \sqrt{2\pi}\bigg( \int_{0}^{2\pi}(f'(t))^2 dt \bigg)^{\frac{1}{2}}=\sqrt{2\pi} \bigg| \int_{0}^{2\pi}f''(t)f(t)dt \bigg|^{\frac{1}{2}}.$$
From Cauchy-Schwarz inequality it follows
$$\text{Var}(f)\leq \sqrt{2\pi} \| f \|^{\frac{1}{2}} \| f'' \|^{\frac{1}{2}}.$$
Finally,  if $f\in \mathcal{F}_\lambda,$ we have $\| f \|^{\frac{1}{2}}=1$ and $\| f'' \|^{\frac{1}{2}} \leq \lambda^{\frac{1}{2}}$ which gives
\begin{equation}
\label{conj1dim}
\frac{1}{2}\text{Var}(f)=\Phi_1(f) \leq \sqrt{ \frac{\pi}{2}} \lambda^{\frac{1}{2}},
\end{equation}
as claimed by  Conjecture \ref{higherdimconj}. In order to extend the result to a  general (not necessarily Morse) $f\in \mathcal{F}_\lambda$,  observe that for every $\epsilon>0$ there exists a sequence of Morse functions $f_n\in \mathcal{F}_{\lambda + \epsilon}$,  such that $d_{C^0}(f,f_n)\rightarrow 0$ when $n\rightarrow \infty.$ For all $k,n\geq 1$ it holds
$$\Phi_{1,k}(f_n)\leq \Phi_1(f_n)\leq \sqrt{\frac{\pi}{2}} (\lambda+\epsilon)^{\frac{1}{2}}.$$
Taking limits for $k,n\rightarrow \infty$ as in (\ref{nonmorsefunct}) and using the fact that $\epsilon>0$ is arbitrary,  we obtain  the inequality  \eqref{conj1dim} for any $f \in \mathcal{F}_\lambda$.
\end{exm}
\begin{exm} The following example shows that the order of $\lambda$ predicted by Conjecture~\ref{higherdimconj} is sharp.
Let   $\mathbb{T}^n=\mathbb{R}^n / (2\pi \cdot \mathbb{Z})^n$ be  the $n$-dimensional torus  equipped with a Euclidean metric $ds^2=\sum dx_i^2$.  Define a sequence of functions
$$f_l(x_1,\ldots , x_n)=\frac{\sqrt{2}}{n(2\pi)^{\frac{n}{2}}}(\sin lx_1 + \ldots + \sin l x_n),~l\in \mathbb{N}.$$
It is easy to check that $\| f_l \|=1$ and $\Delta f_l = l^2 f_l.$ Thus $f_l \in \mathcal{F}_\lambda$ for $\lambda=l^2.$
\end{exm}
\begin{prop}
\label{prop:torus}
There exist constants $A_n$ and $B_n$  such that
$$\Phi_1(f_l)=A_n\lambda^\frac{n}{2}+B_n.$$
\end{prop}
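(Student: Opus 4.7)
The plan is to compute the barcode of $f_l$ explicitly by exploiting the fact that $f_l(x_1,\ldots,x_n)=\sum_{i=1}^n h(x_i)$ is a sum of one-variable functions, where $h(x)=c\sin(lx)$ and $c=\frac{\sqrt{2}}{n(2\pi)^{n/2}}$. First I would enumerate the critical points: on $\mathbb{S}^1$, $h$ has $l$ minima at value $-c$ and $l$ maxima at value $+c$, so the critical points of $f_l$ are products of these. A critical point with $E$ ``max-type'' coordinates has Morse index $E$ and critical value $v_E:=c(2E-n)$, and there are exactly $\binom{n}{E}l^n$ such critical points. Thus $f_l$ has only the $n+1$ distinct critical values $v_0<v_1<\cdots<v_n$, with constant gap $v_{E+1}-v_E=2c$.

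Second, I would identify the Morse chain complex of $f_l$. Since the gradient of $f_l$ is a direct product of one-dimensional gradients, the flow is Morse--Smale and the Morse complex coincides with the tensor product $A_*^{\otimes n}$, where $A_*$ is the Morse complex of $h$ on $\mathbb{S}^1$: $A_0=A_1=\mathbb{R}^l$ with $\operatorname{rank}(\partial_h:A_1\to A_0)=l-1$. Crucially, every index-$E$ generator in the tensor product lies at filtration level exactly $v_E$, so crossing a single critical level attaches all $\binom{n}{E}l^n$ handles of index $E$ simultaneously.

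The key step is to read off the persistence module in each degree. For $v_E< t\le v_{E+1}$ the sub-level set deformation retracts onto the subcomplex $C_*^{\le E}:=\bigoplus_{k\le E}C_k$, whose homology equals $H_k(\mathbb{T}^n)$ for $k<E$, equals $\ker(\partial_E)$ for $k=E$, and vanishes for $k>E$. Consequently the degree-$k$ persistence module is $0$ for $t\le v_k$, equals $\ker(\partial_k)$ for $v_k<t\le v_{k+1}$, and equals $H_k(\mathbb{T}^n)=\mathbb{R}^{\binom{n}{k}}$ for $t>v_{k+1}$, so the degree-$k$ barcode consists of $\binom{n}{k}$ infinite bars $(v_k,+\infty)$ together with $\operatorname{rank}(\partial_{k+1})$ finite bars $(v_k,v_{k+1}]$ of length exactly $2c$. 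Combining the identity $\sum_k\dim(C_k)z^k=l^n(1+z)^n$ with the elementary relations $\dim(C_k)=\operatorname{rank}(\partial_k)+\dim\ker(\partial_k)$ and $\dim\ker(\partial_k)=\operatorname{rank}(\partial_{k+1})+\binom{n}{k}$ gives $\sum_k\operatorname{rank}(\partial_{k+1})z^k=(l^n-1)(1+z)^{n-1}$, so the total number of finite bars equals $2^{n-1}(l^n-1)$.

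Assembling the pieces, $\max f_l-\min f_l=v_n-v_0=2nc$ and the total length of finite bars equals $2c\cdot 2^{n-1}(l^n-1)=2^nc(l^n-1)$, giving
$$\Phi_1(f_l)=2nc+2^nc(l^n-1)=2^n c\cdot l^n+c(2n-2^n),$$
which is of the required form $A_n\lambda^{n/2}+B_n$ with $\lambda=l^2$, $A_n=2^n c$ and $B_n=c(2n-2^n)$. The main subtlety I expect is the non-genericity of $f_l$: its critical values coincide with high multiplicity, so one must justify that the filtered Morse complex genuinely computes the sub-level set persistence module. The product Morse--Smale structure above makes this essentially automatic, but alternatively one can replace $f_l$ by a small Morse perturbation $\widetilde f_l=f_l+\varepsilon G$ with simple critical values, verify that the count $2^{n-1}(l^n-1)$ of finite bars is unchanged, and pass to the limit $\varepsilon\to 0$ using the $C^0$-stability of the bottleneck distance \eqref{Stability}.
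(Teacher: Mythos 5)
Your proof is correct and reaches the same closed-form answer as the paper, but by a genuinely different route. The paper invokes the K\"unneth formula for persistence modules from \cite{PSS}, which gives explicit combinatorial rules for producing $\mathcal{B}(f\oplus h)$ from $\mathcal{B}(f)$ and $\mathcal{B}(h)$, and then iterates starting from $\mathcal{B}(\sin lx)$ on $\mathbb{S}^1$ (one infinite bar and $l-1$ finite bars of length $2$ in degree $0$, one infinite bar in degree $1$). You instead exploit the fact that $f_l$ is essentially self-indexing: every index-$E$ critical point has the common value $v_E=c(2E-n)$, so for $v_E<t<v_{E+1}$ the sublevel set is the $E$-skeleton of a CW structure on $\mathbb{T}^n$. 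From this you read off that the degree-$k$ persistence module is supported on $(v_k,+\infty)$, consisting of $\binom{n}{k}$ infinite bars $(v_k,+\infty)$ and $\operatorname{rank}(\partial_{k+1})$ finite bars $(v_k,v_{k+1}]$, and then a short generating-function computation gives $\sum_k\operatorname{rank}(\partial_{k+1})z^k=(l^n-1)(1+z)^{n-1}$ and hence $2^{n-1}(l^n-1)$ finite bars in total. Your approach avoids the persistence K\"unneth theorem and makes it transparent why every finite bar has the same length $2c$; the paper's approach is shorter once \cite{PSS} is granted and works without needing the critical values to line up so neatly. One small simplification is available: you do not actually need the Morse--Smale condition or the identification of the Morse complex with $A_*^{\otimes n}$. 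The relations $\dim C_k=\operatorname{rank}(\partial_k)+\dim\ker(\partial_k)$ and $\dim\ker(\partial_k)=\operatorname{rank}(\partial_{k+1})+\binom{n}{k}$ use only the cell counts $\dim C_k=\binom{n}{k}l^n$ from the handle decomposition and the Betti numbers of $\mathbb{T}^n$; the specific boundary operators never enter. So the transversality discussion can be dropped, and the perturbation-plus-$C^0$-stability fallback you mention is not needed either.
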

The proof of Proposition \ref{prop:torus} uses the   K\"unneth formula for persistence modules \cite{PSS}, see subsection \ref{subs:torus} for details.

\medskip

Finally, we wish to emphasise that Conjecture \ref{higherdimconj} does not hold for functions in $\mathcal{F}_\lambda$ in dimensions greater than two. This is illustrated by the following example due to Lev Buhovsky \cite{Buh}.

\begin{exm}[Buhovsky's example]\label{example:buh}

For each $n\geq 3,$ we provide a sequence of functions $F_k:\mathbb{T}^n\rightarrow \mathbb{R}$ on $n$-dimensional flat torus $\mathbb{T}^n =\mathbb{R}^n / (2\pi \cdot \mathbb{Z})^n$ such that $\|F_k\|$ and $\| \Delta F_k \|$ are uniformly bounded away from zero and infinity for all $k,$ while $\Phi_1(F_k)$ grows as $k^{n-2}.$ Such sequence violates inequality (\ref{inequality_Strong}).

\medskip

We define $F_k$ as periodic functions on the cube $[-\pi,\pi]^n$ as follows. Let $h:[-1,1]^n\rightarrow [0,1]$ be a bump function. Divide $[-1,1]^n$ into $k^n$ smaller cubes by dividing each interval $[-1,1]$ into $k$ equal parts. Now $h(kx)$ is a bump function supported in $[-\frac{1}{k},\frac{1}{k}]^n$ and we define auxiliary functions $f_k$ to be equal to a copy of $\frac{1}{k^2}h(kx)$ inside each small cube. Since supports of different copies of $\frac{1}{k^2}h(kx)$ are disjoint, $L^2$-orthogonality implies
$$\| f_k\|^2 = k^n \bigg\| \frac{1}{k^2}h(kx) \bigg\|^2 = k^{-4} \| h \|^2,$$
as well as that $\| \Delta f_k \|$ is bounded uniformly in $k.$

Finally, let\footnote{Formally speaking, $F_k$ should be a small perturbation of $f_k+1$ in order to make it Morse, but we will ignore this detail for the sake of clarity.} $F_k=f_k+1.$ This way we obtain a sequence of functions with $\| F_k\|$ and $\| \Delta F_k\|$ bounded away from zero and infinity. At the same time for $t\in (1,1+\frac{1}{k^2})$ the topology of sublevel sets $F_k^{-1}((-\infty,t))$ does not change and each sublevel set is homeomorphic to $\mathbb{T}^n$ with $k^n$ holes. This generates $\sim k^n$ bars of length $\frac{1}{k^2}$ in degree $n-1$ and hence $\Phi_1(F_k)\gg k^{n-2},$ which contradicts (\ref{inequality_Strong}) when $n\geq 3$ because $\frac{F_k}{\|F_k\|}\in \mathcal{F}_\lambda$ with bounded $\lambda,$ but $\Phi_1 \left( \frac{F_k}{\|F_k\|} \right)$ grows as $k^{n-2}.$ A slight modification of this example also yields a counterexample to \eqref{inequality_Weak} in dimensions $n\geq 5.$
\end{exm}

\begin{rem}\label{remark:L^p}
An example similar to Example \ref{example:buh} has been discussed in \cite[Section 5]{CSEHM}. In this paper, $L^p$-versions of functional $\Phi_1$, where the sum is taken over $p$-th powers of the lengths of bars, were considered. The results yield an upper bound for these $L^p$-functionals in terms of the Lipschitz constant of a function. However, for these bounds to hold, it is essential that $p$ is at least the dimension of the base manifold, which can be seen from Example \ref{example:buh}. As a consequence, while the results of \cite{CSEHM} imply some spectral restrictions on the barcodes of Laplace eigenfunctions, they appear to be essentially different from the bounds on $\Phi_1$ obtained in Theorem \ref{Persistence_Bound} and conjectured in Conjecture \ref{higherdimconj}.
\end{rem}

\subsection{Sorting the finite bars of functions in $\mathcal{F}_\lambda$}
\label{sort}
Given a barcode $\mathcal{B}$, write the lengths of its finite bars in the descending order,
$\beta_1 \geq \beta_2 \geq \dots$. The functions $\beta_i(\mathcal{B})$, which are Lipschitz with respect to the bottleneck distance,
are important invariants of barcodes. For instance, $\beta_1$, which was introduced by Usher in \cite{U},  is called {\it the boundary depth}
and has various applications in Morse theory and symplectic topology. The functions $\beta_i$ with $i \geq 2$ are sometimes used
in order to distinguish barcodes, see e.g. \cite{BMMPS}. Fix $\epsilon >0$ and discard all bars of length $< \epsilon$, i.e., introduce
the modified invariant
$$\beta^{(\epsilon)}_i(\mathcal{B}):= \max(\beta_i(\mathcal{B}),\epsilon)\;.$$
\begin{quest}\label{q-1} Assume that the barcode $\mathcal{B}$ contains $N$ finite bars.
What is the optimal (worst-case scenario) running time $T$ of an algorithm which calculates the ordered sequence $\{\beta^{(\epsilon)}_i(\mathcal{B})\}$, $i \geq 1$?
\end{quest}
Since the sharp lower bound on the running time of any comparison sorting algorithm for an array of $N$ real
numbers is ${O}(N\log N)$ (see \cite{CLRS}), the answer to the above question for a general barcode is ${O}(N\log N)$.
Interestingly enough, in some cases Corollary \ref{cor:ubound} enables one to reduce this running time when $\mathcal{B}$ is a barcode of a function from $\mathcal{F}_\lambda$.
More precisely, there exists a constant $c>0$ such that for every $\epsilon > 0$, $\lambda > 0$ and any function $f \in \mathcal{F}_\lambda$ whose barcode contains exactly $N$ finite bars,
one can find a sorting algorithm for all bars from $\mathcal{B}(f)$ of length $\geq \epsilon$ whose running time satisfies
 \begin{equation}\label{eq-runt}
T \leq N + c\cdot\frac{\kappa_g(\lambda+1)}{\epsilon}\cdot\log\frac{\kappa_g(\lambda+1)}{\epsilon}\;.
\end{equation}
Indeed, consider the following algorithm. First compare the length of each bar with $\epsilon$ and
pick only those bars whose length is $\geq \epsilon$. This takes time $N$. Denote by $K$ the number
of chosen bars. Next, perform the optimal sorting algorithm for these  $K$ bars. This takes time ${O}(K\log K)$.
Finally, notice that by Corollary \ref{cor:ubound}, $$K \leq \frac{\kappa_g(\lambda+1)}{\epsilon}\;,$$
which proves \eqref{eq-runt}. In certain regimes, the running time \eqref{eq-runt} is shorter
than the generic bound ${O}(N\log N)$. For instance, if $\lambda$ and $\epsilon$ are fixed and $N \to \infty$, we have
$T = O(N)$.

\medskip

Theorem~\ref{Persistence_Bound} also has applications to questions regarding $C^0$-approximations by functions from  $\mathcal{F}_\lambda$ ,  which is the subject of the next section.


\section{Applications to approximations by eigenfunctions}\label{section-approximations by eigenfunctions}


\subsection{An obstruction to $C^0$-approximations}
As before, let $M$ be an orientable surface, possibly with boundary, endowed with the Riemannian metric $g$, denote by $\text{Diff}(M)$ the set of all diffeomorphisms  of $M$  and assume that $f:M\rightarrow \mathbb{R}$ is a Morse function (vanishing on $\partial M$). We are interested in the question of how well  can $f$ be approximated by functions from $\mathcal{F}_\lambda$ in $C^0$-sense. More precisely, we wish to find a lower bound for the quantity
$$d_{C^0}(f,\mathcal{F}_\lambda)=\inf \{ d_{C^0}(f,h) ~|~ h \in \mathcal{F}_\lambda \},$$
where $d_{C^0}(f,h)=\max\limits_x |f(x)-h(x)|$ as before. In fact, we will study a more general question, namely we will give a lower bound for
$$approx_\lambda(f)=\inf\limits_{\varphi\in \text{Diff}(M)}~d_{C^0}(f\circ \varphi, \mathcal{F}_\lambda).$$
Taking $\varphi = \id_M$ one immediately sees that
$$d_{C^0}(f,\mathcal{F}_\lambda)\geq approx_\lambda(f). $$
We estimate $approx_\lambda(f)$ from below using the information coming from the barcode $\mathcal{B}(f)$. Recall that  the functional $\Phi_1:\mathcal{F}_{\text{Morse}} \rightarrow \mathbb{R} $ defined by (\ref{Functional}) for $u\equiv 1$ gives the sum of the lengths of all finite bars in   $\mathcal{B}(f)$.
{\prop
\label{211}
 For every Morse function $f:M\rightarrow \mathbb{R},$ vanishing on the boundary, the following inequality holds
\begin{equation}\label{Approx_Ineqaulity}
approx_\lambda(f) \geq \begin{cases}
      \frac{1}{2\cdot (|\mathcal{B}'(f)|+1)} \bigg( \Phi_1(f) - \kappa_g(\lambda+1) \bigg) & \text{for}\ \partial M = \emptyset \\
      \frac{1}{2|\mathcal{B}'(f)|+1} \bigg( \Phi_1(f) - \kappa_g(\lambda+1) \bigg) & \text{for}\ \partial M \neq \emptyset \\
    \end{cases}
\end{equation}
\label{Obstruction} }
{\proof From (\ref{LSC_Diffeo}) and (\ref{mainbound}), with $\psi=\id_M$ we obtain
$$ \kappa_g(\lambda+1)\cdot \| u \circ h \| \geq \Phi_u(f)-C(u,f) \cdot d_{C^0}(f\circ \varphi,h),$$
for all Morse $h\in \mathcal{F}_\lambda$ and all diffeomorphisms $\varphi \in \text{Diff}(M),$ with constant $C(u,f)$ being equal to $2\cdot ( |\mathcal{B}'(f)| + 1 )\cdot (\max_{[\min f, \max f]} u )$ or $(2 |\mathcal{B}'(f)| + 1) \cdot (\max_{[\min f, \max f]} u )$ depending on whether $M$ has a boundary. Putting $u\equiv 1$ we have
$$d_{C^0}(f\circ \varphi, h) \geq \frac{1}{2\cdot(|\mathcal{B}'(f)|+1)} \bigg( \Phi_1(f) - \kappa_g(\lambda+1) \bigg),$$
if $\partial M = \emptyset,$ or
$$d_{C^0}(f\circ \varphi, h) \geq \frac{1}{2|\mathcal{B}'(f)|+1} \bigg( \Phi_1(f) - \kappa_g(\lambda+1) \bigg),$$
if $\partial M \neq \emptyset.$ Finally, taking infimum over all $h$ and $\varphi$ and using the fact that Morse functions in $\mathcal{F}_\lambda$ are $C^0$-dense in $\mathcal{F}_\lambda$,  finishes the proof. \qed }
\begin{rem}
The inequality analogous to (\ref{Approx_Ineqaulity}) can be proved  for functions on the  circle $\mathbb{S}^1=\mathbb{R}/(2\pi \cdot \mathbb{Z})$ without referring to the language of barcodes.  Taking into account \eqref{critpoints} and \eqref{conj1dim}, we can restate (\ref{Approx_Ineqaulity}) as
\begin{equation}\label{Approx_Var}
approx_\lambda(f)\geq \frac{1}{|\Crit(f)|} \bigg( \frac{1}{2} \text{Var}(f) -  \sqrt{\frac{\pi}{2}}\lambda^{\frac{1}{2}} \bigg).
\end{equation}
In order to prove (\ref{Approx_Var}) we proceed as in the proof of Proposition \ref{Obstruction}. One readily checks that
\begin{equation}
\label{readily}
\frac{1}{2}\text{Var}(f) - \frac{1}{2}\text{Var}(h) \leq |\Crit(f)|d_{C^0}(f,h).
\end{equation}
Indeed, as in Example \ref{rem:variation},  if $x_1,\ldots , x_N$ are local minima and $y_1,\ldots,y_N$ are local maxima of $f$,  we have that
$$\frac{1}{2}\text{Var}(f)=\sum_{i=1}^N(f(y_i)-f(x_i)).$$
On the other hand,
$$\frac{1}{2}\text{Var}(h)\geq \sum_{i=1}^N(h(y_i)-h(x_i)).$$
Subtracting the latter expression from the former   yields (\ref{readily}), which together with (\ref{conj1dim}) gives
$$\frac{1}{|\Crit(f)|} \bigg( \frac{1}{2} \text{Var}(f) -  \sqrt{\frac{\pi}{2}}\lambda^{\frac{1}{2}} \bigg) \leq d_{C^0}(f,\mathcal{F}_\lambda).$$
Since $|\Crit(f)|$ and $\text{Var}(f)$ do not change when $f$ is composed with a diffeomorphism,  (\ref{Approx_Var}) follows.
\end{rem}
\begin{rem}
\label{nonmorserem}
The following analogue of Proposition \ref{Obstruction} holds for any function $f\in C^0(M)$. For any $k=1,2,\dots$, we have
\begin{equation}
approx_\lambda(f) \geq \begin{cases}
      \frac{1}{2k+2} \bigg( \Phi_{1,k}(f) - \kappa_g(\lambda+1) \bigg) & \text{for}\ \partial M = \emptyset \\
      \frac{1}{2k+1} \bigg( \Phi_{1,k}(f) - \kappa_g(\lambda+1) \bigg) & \text{for}\ \partial M \neq \emptyset \\
    \end{cases}
\end{equation}
The proof is the same, with the constant $C(u,f)$ replaced by the Lipschitz constant from  Lemma \ref{Lipsch}.
\end{rem}
\begin{cor}
\label{214}
Let $M$ be a surface without boundary and $f:M \rightarrow \mathbb{R}$ be a Morse function.
Suppose that $approx_\lambda(f)\le \epsilon$ for some $\epsilon>0$, and the barcode $\mathcal{B}(f)$ contains $N$ finite bars  of length at least $L+2 \varepsilon$ each, for some $L>0$. Then
\begin{equation}\label{Eigenvalue_Estimate}
\lambda \geq \frac{1}{\kappa_g}(N+1)L-1.
\end{equation}
\end{cor}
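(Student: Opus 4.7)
The plan is to apply the cut-off version of Proposition \ref{Obstruction} given in Remark \ref{nonmorserem}, specifically with the parameter $k=N$. The point of using the cut-off functional is that the denominator $2k+2=2N+2$ then depends on $N$ only, rather than on the total number $|\mathcal{B}'(f)|$ of finite bars of $\mathcal{B}(f)$, which could be far larger than $N$. For the Morse function $f$ at hand, the cut-off functional simplifies to
$$\Phi_{1,N}(f)=\max f-\min f+\sum_{i=1}^{N} |I_i|,$$
where $I_1,\dots,I_N$ are the $N$ longest finite bars of $\mathcal{B}(f)$.

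The first step is to produce the lower bound $\Phi_{1,N}(f)\geq (N+1)(L+2\varepsilon)$. By hypothesis $\mathcal{B}(f)$ contains $N$ finite bars of length at least $L+2\varepsilon$, so once the finite bars are arranged in decreasing order of length each of $|I_1|,\dots,|I_N|$ is at least $L+2\varepsilon$, giving $\sum_{i=1}^N |I_i|\geq N(L+2\varepsilon)$. Since every finite bar of $\mathcal{B}(f)$ is contained in $(\min f,\max f]$, the assumption $N\geq 1$ also gives $\max f-\min f\geq L+2\varepsilon$. Adding these yields the desired bound.

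The second step is to substitute this estimate into Remark \ref{nonmorserem} (in the boundaryless case),
$$\varepsilon\;\geq\;approx_\lambda(f)\;\geq\;\frac{1}{2N+2}\bigl(\Phi_{1,N}(f)-\kappa_g(\lambda+1)\bigr),$$
and rearrange. The inequality becomes $\kappa_g(\lambda+1)\geq \Phi_{1,N}(f)-2(N+1)\varepsilon\geq (N+1)(L+2\varepsilon)-2(N+1)\varepsilon=(N+1)L$, which is exactly \eqref{Eigenvalue_Estimate}. There is no real obstacle: the one subtlety is the choice to apply the $k$-cut-off version of the obstruction so that the Lipschitz constant matches $N+1$ rather than the possibly much larger $|\mathcal{B}'(f)|+1$ that one would get from Proposition \ref{Obstruction} itself.
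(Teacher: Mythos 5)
Your proposal is correct and follows essentially the same route as the paper: both rely on the bound $\Phi_{1,N}(f)\geq (N+1)(L+2\varepsilon)$ (which you justify in a bit more detail, using that the $N$ long finite bars contribute $N(L+2\varepsilon)$ and that $\max f-\min f\geq L+2\varepsilon$ since finite bars lie in $(\min f,\max f]$), and both plug this into the cut-off version of Proposition \ref{Obstruction} from Remark \ref{nonmorserem} with $k=N$ and rearrange.
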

\begin{proof}
Indeed, it follows from the assumptions on the barcode of $f$ that  $\Phi_{1,N}(f)\geq (N+1)(L+2\varepsilon)$, which together with Remark~\ref{nonmorserem} yields
$$\varepsilon \geq \frac{1}{2(N+1)} \bigg( (N+1)(L+2\varepsilon) - \kappa_g(\lambda+1)   \bigg).$$
Rearranging this inequality we obtain \eqref{Eigenvalue_Estimate}.
\end{proof}
{\rem From (\ref{Eigenvalue_Estimate}) we see how $\lambda$, which is needed to uniformly approximate $f$ by functions from $\mathcal{F}_\lambda$,  grows with $N$ and $L$. Informally speaking, one may think of $N$ as  a measure of how much $f$ oscillates, while $L$ gives a lower bound on the amplitude of these oscilations. The above inequality should then be understood as a quantitative version of the informal principle that the more the function oscillates and the bigger the oscillations, the larger eigenvalues of the Laplacian are needed to approximate it with a normalized linear combination of the corresponding eigenfunctions. We refer to \cite{W2} for other applications of persistence to approximation theory.}

\subsection{Modulus of continuity and average length of bars on $\mathbb{T}^2$}
 Proposition~\ref{Obstruction} gives an obstruction to approximating functions by functions from $\mathcal{F}_\lambda.$ As we mentioned before, $\mathcal{F}_\lambda$ contains normalized linear combinations of eigenfunctions of $\Delta$ with eigenvalues not greater than $\lambda.$ In the case of flat torus $\mathbb{T}^2=\mathbb{R}^2 / (2\pi \cdot \mathbb{Z})^2$ these eigenfunctions are trigonometric polynomials and Proposition~\ref{Obstruction} may be interpreted as an inverse statement about $C^0$-approximations by trigonometric polynomials. A direct theorem about $C^0$-approximations by trigonometric polynomials  on $n$-dimensional flat torus was proved in \cite{Judin} (theorems of this type are sometimes referred to as Jackson's theorems, see \cite{Pinkus} for a survey), consequently giving an upper bound for $approx_\lambda (f)$ in terms of moduli of continuity and smoothness of $f.$ We combine this result with Proposition~\ref{Obstruction} to obtain a relation between the average length of a bar in a barcode of a  Morse function on $\mathbb{T}^2$ and its modulus of continuity which is defined below.
\\

Assume $M=\mathbb{T}^2=\mathbb{R}^2 / (2\pi \cdot \mathbb{Z})^2$, let $f:M\rightarrow \mathbb{R}$ be a continuous function, $\delta >0$ a real number and denote by
$$\omega_1(f,\delta)=\sup_{|t|\leq \delta} \max_x |f(x+t)-f(x)|,$$
the {\it modulus of continuity} of $f$ and by
$$\omega_2(f,\delta)=\sup_{|t|\leq \delta} \max_x |f(x-t)-2f(x)+ f(x+t)|,$$
the {\it modulus of smoothness} of $f.$ One readily checks that
\begin{equation}\label{Moduli}
  \omega_2(f,\delta) \leq 2 \omega_1(f,\delta).
\end{equation}
Let
$$\mathcal{T}_\lambda= \bigg\langle \sin(v_1 x+v_2 y), \cos(v_1 x+v_2 y) ~ \bigg| ~ v_1^2+v_2^2 \leq \lambda \bigg\rangle_{\mathbb{R}},$$
be the space of trigonometric polynomials on $M$ whose eigenvalues (as eigenfunctions of $\Delta$) are bounded by $\lambda.$
The following porposition was proved in \cite{Judin}:
\begin{prop}
 For every continuous function $f:M\rightarrow \mathbb{R}$ it holds
\begin{equation}\label{Modulus_Smoothness}
  d_{C^0}(f,\mathcal{T}_\lambda) \leq 2 \omega_2 \bigg(f,\frac{C_0}{\sqrt{\lambda}} \bigg),
\end{equation}
where $C_0>0$ is a constant.
\label{Yudin}
\end{prop}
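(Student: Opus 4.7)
The plan is to realize the approximating polynomial as a convolution $T_\lambda f = f * K_\lambda$ against a symmetric, nonnegative Jackson-type kernel $K_\lambda$ on $\mathbb{T}^2$ whose Fourier support lies in the closed disk $\{(v_1,v_2)\in \mathbb{Z}^2 : v_1^2 + v_2^2 \leq \lambda\}$, so that $T_\lambda f \in \mathcal{T}_\lambda$ automatically. The error $T_\lambda f - f$ will then be estimated pointwise using the central symmetry of the kernel together with the second modulus of smoothness $\omega_2$, which is precisely the reason a factor of $\omega_2$ (rather than $\omega_1$) appears on the right-hand side of (\ref{Modulus_Smoothness}).

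To build $K_\lambda$, I would start from the classical one-dimensional Jackson kernel $J_m$: a nonnegative, even trigonometric polynomial of degree at most $m$ satisfying $\int_{-\pi}^{\pi} J_m = 1$ and $\int_{-\pi}^{\pi} \theta^2 J_m(\theta)\,d\theta \leq c_1/m^2$. Choose $m = \lfloor \sqrt{\lambda/2} \rfloor$ and set $K_\lambda(x,y) = J_m(x)\,J_m(y)$; this kernel is centrally symmetric, nonnegative, of total mass one, has tensor Fourier support in $|v_1|,|v_2|\leq m$ (hence inside $v_1^2+v_2^2\leq \lambda$), and satisfies $\int_{\mathbb{T}^2} K_\lambda(t)\,|t|^2\,dt \leq c_2/\lambda$ by Fubini. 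Exploiting $K_\lambda(-t)=K_\lambda(t)$, change variables $t\mapsto -t$ in one half of the convolution and average to write
$$(T_\lambda f - f)(x) = \tfrac{1}{2}\int_{\mathbb{T}^2} K_\lambda(t)\,\bigl(f(x+t)+f(x-t)-2f(x)\bigr)\,dt,$$
whence $|T_\lambda f(x) - f(x)| \leq \tfrac{1}{2}\int K_\lambda(t)\,\omega_2(f,|t|)\,dt$. Now apply the standard dilation inequality $\omega_2(f,s\delta)\leq (1+s)^2\,\omega_2(f,\delta)$ with $\delta = C_0/\sqrt{\lambda}$ and $s = |t|\sqrt{\lambda}/C_0$, expand the square, bound $\int K_\lambda(t)|t|\,dt$ by $\bigl(\int K_\lambda(t)|t|^2\,dt\bigr)^{1/2}$ via Cauchy--Schwarz, and plug in the second-moment estimate. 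For $C_0$ sufficiently large the expression $\int K_\lambda(t)(1 + |t|\sqrt{\lambda}/C_0)^2 dt$ becomes at most $4$, so the prefactor $\tfrac{1}{2}$ yields the desired bound $\|T_\lambda f - f\|_{C^0} \leq 2\,\omega_2(f,C_0/\sqrt{\lambda})$.

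The main obstacle I anticipate is arranging the spectral support of $K_\lambda$ to sit inside the Euclidean disk $\{v_1^2+v_2^2\leq\lambda\}$ dictated by the eigenvalues of $\Delta$, rather than inside a coordinate square. The tensor-product Jackson construction circumvents this by fitting the square inside the disk, at the cost of a universal constant absorbed into $C_0$; a sharper approach using a genuinely radial kernel with disk-shaped spectrum (for instance, a symmetrization of the squared Fej\'er-type kernel adapted to the disk) could improve the constants but requires extra care to preserve positivity and the second-moment bound. A minor secondary point is the verification of the dilation inequality for $\omega_2$, which is standard but deserves separate treatment in the regimes $s \leq 1$ and $s \geq 1$.
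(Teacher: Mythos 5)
Your proof is correct but takes a genuinely different route from the one the paper relies on. Proposition~\ref{Yudin} is quoted from \cite{Judin}, and the paper later (in the proof of Theorem~\ref{modulcont}) reveals Yudin's construction: set $W = V*V$, where $V$ is the zero-extension to $\mathbb{R}^2$ of the first Dirichlet eigenfunction of the disk $D^2(\tfrac12)$, and define the approximant spectrally as the multidimensional Korovkin mean $h(x) = \sum_{|v|\leq\sqrt\lambda} c_v(f)\,W(v/\sqrt\lambda)\,e^{i\langle v,x\rangle}$. Because $W$ is supported in the unit disk, the spectrum of $h$ lands in $\{v_1^2+v_2^2\leq\lambda\}$ with no geometric slack, and this gives the sharp constant $C_0 = \sqrt{\mu_1(D^2(\tfrac12))}$. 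You instead convolve against a tensor product $K_\lambda(x,y)=J_m(x)J_m(y)$ of one-dimensional Jackson kernels, fit the square Fourier support inside the disk, and use the even-kernel reflection plus the dilation estimate $\omega_2(f,s\delta)\leq(1+s)^2\omega_2(f,\delta)$; this is more elementary and does yield the same inequality, at the cost of a non-explicit and somewhat larger $C_0$ (your inscribed square wastes a $\sqrt2$). Yudin's multiplier also satisfies $|W|\leq\|V\|_{L^2}^2=1$, which the paper uses afterwards to get $\|h\|\leq\|f\|$; your construction gives the analogous $L^2$-contraction since $K_\lambda\geq0$ and $\int K_\lambda=1$, so your kernel would also serve in the downstream application. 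One detail to fix: the classical Jackson kernel of ``order $m$'' has trigonometric degree roughly $2m$, so with $m=\lfloor\sqrt{\lambda/2}\rfloor$ you should either halve $m$ once more or explicitly build $J_m$ of degree $\leq m$ to guarantee the spectrum stays inside the disk of radius $\sqrt{\lambda}$.
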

By (\ref{Moduli}) we also have that
\begin{equation}\label{Modulus_Continuity}
  d_{C^0}(f,\mathcal{T}_\lambda) \leq 4 \omega_1 \bigg(f,\frac{C_0}{\sqrt{\lambda}} \bigg).
\end{equation}
{\rem Constant $C_0$ is computed in \cite{Judin} to be $C_0=\sqrt{\mu_1(D^2(\frac{1}{2}))}$, where $\mu_1(D^2(\frac{1}{2}))$ is the first Dirichlet eigenvalue of $\Delta$ inside the 2-dimensional disk $D^2(\frac{1}{2})$ of radius $\frac{1}{2}$.}

Our goal is to prove the following result which shows that the average bar length of a Morse function $f$ on a flat torus $M$ could be {\it uniformly} controlled by the  $L^2$-norm of $f$ and the  modulus of continuity of $f$ on the scale $1/\sqrt{|\mathcal{B'}(f)|}$.
\begin{thrm}
\label{modulcont}
There exist constants $C_0, C_1>0$ such that for any $f \in \mathcal{F}_{\text{Morse}}$ on a flat torus $M=\mathbb{T}^2$,
\begin{equation}
\label{averagebar}
\frac{\Phi_1(f)}{|\mathcal{B}'(f)|+1} \le C_1\|f\| + 8\omega_1\left(f,\frac{C_0}{\sqrt{|\mathcal{B}'(f)|}}\right).
\end{equation}
\end{thrm}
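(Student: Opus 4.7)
The strategy is to combine Proposition \ref{Obstruction} (the obstruction to $C^0$-approximation by $\mathcal{F}_\lambda$) with Yudin's direct approximation inequality \eqref{Modulus_Continuity}, coupling $\lambda = |\mathcal{B}'(f)|$ so that after dividing by $|\mathcal{B}'(f)|+1$ the error term $\kappa_g(\lambda+1)$ collapses to a constant. The coefficient $8$ arises as the product of the factor $2$ in Proposition \ref{Obstruction} and the factor $4$ in \eqref{Modulus_Continuity}. I would first reduce to $\|f\|=1$: both sides of \eqref{averagebar} are positively homogeneous of degree one in $f$ (the quantities $\Phi_1(f)$, $\omega_1(f,\delta)$, and $\|f\|$ all scale linearly, while $|\mathcal{B}'(f)|$ is scale-invariant), so after replacing $f$ by $f/\|f\|$ the target becomes $\Phi_1(f)/(|\mathcal{B}'(f)|+1) \leq C_1 + 8\omega_1(f, C_0/\sqrt{|\mathcal{B}'(f)|})$.

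With this normalization in place, set $\lambda := |\mathcal{B}'(f)|$ and invoke Yudin's theorem in the form \eqref{Modulus_Continuity} to produce a trigonometric polynomial $p \in \mathcal{T}_\lambda$ (made Morse by a small perturbation) with $\|f-p\|_{C^0} \leq 4\omega_1(f, C_0/\sqrt{\lambda})$. Since every $p \in \mathcal{T}_\lambda$ satisfies $\|\Delta p\|_{L^2} \leq \lambda\|p\|_{L^2}$, the rescaling $h := p/\|p\|_{L^2}$ belongs to $\mathcal{F}_\lambda$. The normalization error is controlled by $|\|p\|_{L^2}-1| \leq \sqrt{\operatorname{vol}(M)}\,\|f-p\|_{C^0}$, and a short triangle inequality yields a bound on $\|f-h\|_{C^0}$ differing from $\|f-p\|_{C^0}$ by a term absorbable into the $C_1\|f\|$ part. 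Then Proposition \ref{Obstruction} (case $\partial M=\emptyset$) gives
$$\Phi_1(f) \leq 2(|\mathcal{B}'(f)|+1)\,\|f-h\|_{C^0} + \kappa_g(|\mathcal{B}'(f)|+1),$$
and dividing by $|\mathcal{B}'(f)|+1$ produces the desired inequality \eqref{averagebar}.

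The delicate point, and the main obstacle, is the normalization step. Yudin's theorem lives in the unnormalized class $\mathcal{T}_\lambda$, whereas Proposition \ref{Obstruction} requires an element of $\mathcal{F}_\lambda$ of $L^2$-norm exactly $1$. Tracking the correction $\|p-h\|_{C^0} = \|p\|_{C^0}\cdot|\|p\|_{L^2}-1|/\|p\|_{L^2}$ without inflating the coefficient of $\omega_1$ beyond $8$ requires care, since $\|p\|_{C^0}$ is not a priori controlled by $\|f\|_{L^2}$. A cleaner alternative that sidesteps $\mathcal{F}_\lambda$ entirely is the following: by homogeneity, Theorem \ref{Persistence_Bound} extends to $\Phi_1(p) \leq \kappa_g(\lambda+1)\|p\|_{L^2}$ for every $p \in \mathcal{T}_\lambda$, so inserting this into the Lipschitz estimate \eqref{LSC_Functions} together with $\|p\|_{L^2} \leq \|f\| + \sqrt{\operatorname{vol}(M)}\,\|f-p\|_{C^0}$ yields \eqref{averagebar} directly, with $C_1$ absorbing the metric constants.
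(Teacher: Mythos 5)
Your overall strategy is correct and matches the paper: couple Proposition \ref{Obstruction} with Yudin's approximation theorem, set $\lambda = |\mathcal{B}'(f)|$, and divide through. You also correctly spot the real difficulty: Yudin's theorem hands you $p \in \mathcal{T}_\lambda$, but Proposition \ref{Obstruction} wants something of unit $L^2$-norm, and renormalizing $p$ to $h=p/\|p\|$ costs you $\|p-h\|_{C^0}$, which involves $\|p\|_{C^0}$ and is not controlled by $\|f\|_{L^2}$. This is indeed the delicate point, and your first route, as written, does not close it.

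The paper resolves this differently from either of your variants: rather than renormalizing $p$ (your first route) or abandoning $\mathcal{F}_\lambda$ altogether (your second route), it \emph{rescales $f$}. Concretely, the Korovkin mean $h$ used in Yudin's construction satisfies the exact bound $\|h\|_{L^2}\le\|f\|_{L^2}$, which one gets from Parseval plus the Cauchy--Schwarz estimate $|W|\le\|V\|^2=1$ on the Fej\'er-type kernel $W=V*V$. Then $h/\|h\|\in\mathcal{F}_\lambda$, and one estimates $\|h\|\cdot d_{C^0}(f/\|h\|,\mathcal{F}_\lambda)\le d_{C^0}(f,h)\le 2\omega_2$. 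Applying Proposition \ref{Obstruction} to $f/\|h\|$ and using the scaling identities $|\mathcal{B}'(f/\|h\|)|=|\mathcal{B}'(f)|$ and $\|h\|\,\Phi_1(f/\|h\|)=\Phi_1(f)$, together with $\|h\|\le\|f\|$ to bound the error term $\kappa_0(\lambda+1)\|h\|$ from above, yields the inequality with $\omega_2$ and coefficient $4$; the final step $\omega_2\le 2\omega_1$ produces the $8$. The point is that multiplying by $\|h\|$ cancels exactly against $\Phi_1(f/\|h\|)$, so you never need to control $\|p\|_{C^0}$ or $\|h\|_{C^0}$, and the potential smallness of $\|h\|$ only helps.

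Your ``cleaner alternative'' is internally correct but proves a weaker statement. Working it through: $\Phi_1(f)\le\Phi_1(p)+2(|\mathcal{B}'(f)|+1)\|f-p\|_{C^0}$, and $\Phi_1(p)\le\kappa_g(\lambda+1)\sqrt{\operatorname{vol}(M)}\,\|p\|_{L^2}$ (note the extra $\sqrt{\operatorname{vol}(M)}$, since $\|1\circ p\|_{L^2}=\sqrt{\operatorname{vol}(M)}$ in \eqref{mainbound}), together with $\|p\|_{L^2}\le\|f\|+\sqrt{\operatorname{vol}(M)}\|f-p\|_{C^0}$, gives after dividing by $|\mathcal{B}'(f)|+1$ a coefficient of $\bigl(4\kappa_g\operatorname{vol}(M)+8\bigr)$ in front of $\omega_1$, not $8$. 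That extra $\kappa_g$-dependent piece cannot be absorbed into $C_1\|f\|$, since $\omega_1(f,\delta)$ is not bounded by $\|f\|_{L^2}$. For the qualitative applications in the paper (the scale $C_0/\sqrt{|\mathcal{B}'(f)|}$ is what is tested in the optimality example) this is harmless, but it is not the theorem as stated.

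One minor technical remark: Proposition \ref{Obstruction} compares to Morse functions in $\mathcal{F}_\lambda$, and $h/\|h\|$ need not be Morse; the paper's proof of Proposition \ref{Obstruction} already handles this via $C^0$-density of Morse functions in $\mathcal{F}_\lambda$, so no additional perturbation argument is needed at the level of Theorem \ref{modulcont}.
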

\begin{proof}
Inspecting the proof of Proposition~\ref{Yudin} in \cite{Judin}, we observe that it relies on an explicit construction of a function $h,$ depending on $f$ and $\lambda,$ which satisfies
\begin{equation}\label{Approximation_Function}
  d_{C^0}(f,h)\leq 2 \omega_2 \bigg(f,\frac{C_0}{\sqrt{\lambda}} \bigg).
\end{equation}
Our goal is to estimate $d_{C^0}(f,h)$ from below using Proposition~\ref{Obstruction}. However, a priori we do not have any information about the $L^2$-norm of $h$ and Proposition~\ref{Obstruction} relates to distance from functions in $\mathcal{F}_\lambda$ whose $L^2$-norm is equal to one. In order to overcome this issue, we present the construction of the approximation-function $h$ and prove that $\| h \| \leq \| f \|.$
\\
\\
For a vector $v\in \mathbb{Z}^2$ let
$$c_v(f)=\frac{1}{(2\pi)^2}\int_{\mathbb{T}^2} f(x) e^{-i \langle v, x \rangle} dx,$$
be the corresponding Fourier coefficient of $f.$ Take $U$ to be the first Dirichlet eigenfunction of $\Delta$ inside the disk $D^2(\frac{1}{2})$ of radius $\frac{1}{2}$, normalized by $\| U \|_{L^2(D^2(\frac{1}{2}))}=1,$ and $V$ its extension by zero to the whole plane, i.e.
$$ V(x)=\begin{cases}
          U(x), & \mbox{if } x\in D^2(\frac{1}{2})  \\
          0, & \mbox{otherwise}.
        \end{cases} $$
If we denote by $W=V*V$ the convolution of $V$ with itself, the desired approximation is given by the formula
\begin{equation}\label{Korovkin_Mean}
  h(x)=\sum_{ \substack{ v \in \mathbb{Z}^2 \\ |v| \leq \sqrt{\lambda}} } c_v(f)\cdot W\bigg(\frac{v}{\sqrt{\lambda}} \bigg)e^{i\langle v,x \rangle},
\end{equation}
where $|v|$ stands for the standard Euclidean norm on $\mathbb{R}^2.$ The function $h$ defined by (\ref{Korovkin_Mean}) is called the \textit{multidimensional Korovkin's mean}. It satisfies (\ref{Approximation_Function}), as proved in \cite{Judin}, and since $\sum_{v\in \mathbb{Z}^2} c_v(f)e^{i\langle v,x \rangle}$ is the Fourier expansion of $f$, we have that
$$\| h \| \leq (\max_{x\in\mathbb{R}^2} |W(x)|)\cdot \| f \|.$$
By using the Cauchy-Schwarz inequality we obtain
$$|W(x)|\leq \int_{\mathbb{R}^2}|V(t)|\cdot |V(x-t)|dt \leq \sqrt{\int_{\mathbb{R}^2}|V(t)|^2dt } \cdot \sqrt{\int_{\mathbb{R}^2}|V(x-t)|^2dt}=\| V \|^2=1,$$
which yields $\| h \| \leq \| f \|.$
\\
\\
We now proceed with analysing (\ref{Approximation_Function}). First note that
$$ \| h \| \cdot d_{C^0} \bigg( \frac{f}{\|h\|} , \mathcal{F}_\lambda \bigg) \leq \| h \| \cdot d_{C^0} \bigg(\frac{f}{\| h \|},\frac{h}{\| h \|} \bigg) \leq 2 \omega_2 \bigg(f,\frac{C_0}{\sqrt{\lambda}} \bigg),$$
because $\frac{h}{\| h \|} \in \mathcal{F}_\lambda.$ The last inequality together with Proposition~\ref{Obstruction} gives
\begin{equation}\label{Equation_Long}
\frac{\| h \|}{2\cdot(|\mathcal{B'}(\frac{f}{\| h \|} )|+1)} \bigg( \Phi_1 \bigg(\frac{f}{\| h \|} \bigg) - \kappa_0(\lambda+1) \bigg)  \leq 2 \omega_2 \bigg(f,\frac{C_0}{\sqrt{\lambda}} \bigg).
\end{equation}
Here $\kappa_0=\kappa_g$ for $g$ being the flat metric on $M$.
Multiplying the function by a positive constant results in multiplying the endpoints of each bar in the barcode by the same constant. Thus, the total number of bars does not change after multiplication, while the lengths of finite bars scale with the same constant. In other words, we have that $\bigg | \mathcal{B'}\bigg( \frac{f}{\| h \|} \bigg) \bigg|= | \mathcal{B'}(f)|$ and $\| h \| \cdot \Phi_1 \bigg( \frac{f}{\| h \|} \bigg)=\Phi_1(f).$ Substituting these equalities in (\ref{Equation_Long}) and using $\| h \| \leq \| f \|$ we obtain
\begin{equation}
\label{smooth2}
\frac{1}{4\cdot(|\mathcal{B'}(f)|+1)} \bigg( \Phi_1(f) - \kappa_0(\lambda+1) \| f \| \bigg) \leq \omega_2 \bigg(f,\frac{C_0}{\sqrt{\lambda}} \bigg),
\end{equation}
and by (\ref{Moduli}) also
$$\frac{1}{8\cdot(|\mathcal{B'}(f)|+1)} \bigg( \Phi_1(f) - \kappa_0(\lambda+1) \| f \| \bigg) \leq \omega_1 \bigg(f,\frac{C_0}{\sqrt{\lambda}} \bigg).$$
Setting $\lambda=|\mathcal{B}'(f)|$ and $C_1=\kappa_0$ in the last inequality completes the proof of the theorem.
\end{proof}
\begin{rem}
As follows from  Remark \ref{nonmorserem} and the proof of Theorem \ref{modulcont} above, for any $k\ge 1$ and any $f\in C^0(M)$ we have:
\begin{equation}
\label{averagebark}
\frac{\Phi_{1,k}(f)}{k+1} \le C_1\|f\| + 8 \omega_1\left(f,\frac{C_0}{\sqrt{k}}\right).
\end{equation}
The left-hand side of \eqref{averagebark} could be interpreted as  the average length of a  bar among the $k$ longest bars in the barcode of $f$.
\end{rem}
\begin{rem}\label{Remark:Ganzburg}
Note that formula \eqref{smooth2} implies
\begin{equation}
\label{ganz}
\frac{\Phi_1(f)}{|\mathcal{B}'(f)|+1} \le C_1\|f\| + 4\omega_2\left(f,\frac{C_0}{\sqrt{|\mathcal{B}'(f)|}}\right).
\end{equation}
In fact, Theorem \ref{modulcont} admits the following generalization. Given a smooth function $f$ on a flat torus $M=\mathbb{T}^2$, define its {\it modulus of smoothness of order $m$} by
$$
\omega_m(f,\delta)=\sup_{|t|\le \delta}\max_x \left|\sum_{j=0}^m (-1)^{(m-j)} \binom{m}{j} f(x+jt) \right|.
$$
From the results of \cite{Ganzburg}, it can be deduced that
$$d_{C^0}(f,\mathcal{T}_\lambda) \leq C_2(k)\, \omega_{2k} \bigg(f,\frac{C_0(k)}{\sqrt{\lambda}} \bigg)$$
for some positive constants $C_0(k), C_2(k)$ which depend on $k.$ Similarly to the proof of Theorem \ref{modulcont}, one then obtains
$$
\frac{\Phi_1(f)}{|\mathcal{B}'(f)|+1} \le C_1(k)\|f\| + 2C_2(k)\, \omega_{2k}\left(f,\frac{C_0(k)}{\sqrt{|\mathcal{B}'(f)|}}\right).
$$
Constants $C_0(k), C_1(k), C_2(k)$ could be computed  explicitly.
\end{rem}
\begin{exm} The following example shows that the choice of the scale  in the modulus of continuity on the right hand side of \eqref{averagebark}   is optimal.
Take a unit disk $B_1$ inside the  torus $M$ and let $\chi$ be a smooth cut-off function supported in $B$ and equal to one in $B_{\frac{1}{2}}$. Let
$g_n(x,y)=\chi(x,y) \sin nx \cos ny$, $n\in \mathbb{N}$ be a sequence of functions on the torus. For any $0<s<1$, set $g_{n,s}(x,y)=g(\frac{x}{s},\frac{y}{s})$. Let $\alpha \ge 1$ be some real number.
Choose $k=n^2$ and $s=n^{\frac{1-\alpha}{2}}$. It suffices to verify that  the inequality
\begin{equation}
\label{optimalexample}
\frac{\Phi_{1,n^2}(g_{n,s})}{n^2+1} \le C_1\|g_{n,s}\| + 8 \omega_1\left(g_{n,s},\frac{C_0}{n^{\alpha}}\right).
\end{equation}
holds for all $n$ only for $\alpha=1$. Indeed, take any $\alpha>1$.  Note that the  left-hand side of \eqref{optimalexample} is bounded away from zero as $n\to \infty$, since the number of bars of  unit length in the barcode of $g_{n,s}((x,y))$
is of order $n^2$. At the same time,  $\|g_{n,s}\| =s\|g_n\| \to 0$ as $n\to\infty$, since $s=n^{\frac{1-\alpha}{2}} \to 0$. Moreover, estimating the derivatives of $g_{n,s}$ one can easily check that
$$
 \omega_1\left(g_{n,s},\frac{C_0}{n^{\alpha}}\right) = O(n^{-\alpha})\, O(n\cdot n^{\frac{\alpha-1}{2}})=O\left(n^{\frac{1-\alpha}{2}}\right)=o(1)
$$
for any $\alpha>1$.
Therefore, inequality \eqref{optimalexample} is violated for $\alpha>1$ for $n$ large, and hence the choice $\alpha=1$ is optimal.

Note that, while the functions $g_{n,s}(x,y)$ are compactly supported and hence not Morse, they could be made Morse by adding a small perturbation.  A similar argument would then yield optimality of the  $1/\sqrt{|\mathcal{B'}(f)|}$ scale in the modulus of continuity on the right-hand side of inequality \eqref{averagebar}.
\end{exm}

\begin{rem} It would be interesting to generalize Theorem~\ref{modulcont} to an arbitrary Riemannian surface. In order to do that we need an analogue of Proposition~\ref{Yudin}. For a different version of Jackson type approximation theorems on Riemannian manifolds see \cite[Lemma 4.1]{IP} and \cite[Lemma 9.1]{FFP}.
\end{rem}

\section{Barcodes and the Banach indicatrix}
\subsection{A topological bound on the Banach indicatrix}
We proceed with some general topological considerations. Let $M$ be a Riemannian manifold and assume that $f|_{\partial M}=0$ if $\partial M \neq \emptyset$, $0$ being a regular value.  Let $t\neq 0$ be another regular value of $f$ and denote by $M^t=f^{-1}((-\infty,t])$. $M^t\subset M$ is a submanifold with boundary $\partial M^t=f^{-1}(t)$ for $t<0$ or $\partial M^t=f^{-1}(t) \sqcup \partial M$ for $t>0.$ Recall that Banach indicatrix $\beta(t,f)$ denotes the number of connected components of $f^{-1}(t).$ By description of $\partial M^t,$ one sees that $\beta(t,f)$ essentially counts the number of the boundary components of $M^t$ (up to the boundary components of the whole manifold $M$). We will exploit this fact to estimate $\beta(t,f)$ from below using information coming from barcode $\mathcal{B}(f).$ If we denote by $\chi_I$ the characteristic function of the interval $I$, the following proposition holds.
{\prop Let $f \in \mathcal{F}_{Morse}$ on a Riemannian manifold $M$ of dimension $n$. Denote by $(x_i^{(k)},y_i^{(k)}]\in \mathcal{B}_k(f)$ the finite bars in the degree $k$ barcode of $f$ and let $t\neq 0$ be a regular value. If $\partial M = \emptyset$ it holds
\begin{equation}\label{Estimate_Top_NoBdry}
\chi_{(\min f,\max f]}(t) + \sum_i \chi_{(x_i^{(0)},y_i^{(0)}]}(t) + \sum_j \chi_{(x_j^{(n-1)},y_j^{(n-1)}]}(t) \leq \beta(t,f),
\end{equation}
\noindent and if $\partial M \neq \emptyset$ it holds
\begin{equation}\label{Estimate_Top_Bdry}
\chi_{(\min f,0]}(t) + \sum_i \chi_{(x_i^{(0)},y_i^{(0)}]}(t) + \sum_j \chi_{(x_j^{(n-1)},y_j^{(n-1)}]}(t) \leq \beta(t,f).
\end{equation}
 \label{Bound_Topological} }

We defer proving Proposition~\ref{Bound_Topological} and first deduce Theorem~\ref{Persistence_Bound} using it.

\begin{rem}
One may easily check that if $M=S^2$ the inequality (\ref{Estimate_Top_NoBdry}) becomes an equality.
\end{rem}

\begin{rem} \label{rem-reebgraph} If $\dim M=2$, integrating inequalities in Proposition~\ref{Bound_Topological} gives an upper bound on the total length of the finite bars in the barcode of a function $f$ in terms of the integral of its Banach indicatrix. The latter quantity admits an interpretation as the total length of the Reeb graph of a function $f$ with respect to a natural metric incorporating the oscillations
of $f$. It is likely that an analogue of the functional $\Phi_u$ defined in this setting is robust with respect to the distance on Reeb graphs introduced in \cite{BGW, BMW}, and that this way one could get applications to approximation theory similar to the ones obtained in Section~\ref{section-approximations by eigenfunctions}. We plan to explore this route elsewhere.
\end{rem}

\subsection{Proof of Theorem \ref{Persistence_Bound}}
Let us now restrict to the two-dimensional case and assume that $M$ is an orientable surface, possibly with boundary, equipped with Riemannian metric $g$.
First note that it suffices to verify inequality \eqref{mainbound} for Morse functions. Indeed, suppose that the inequality is proved for Morse functions in $\mathcal{F}_\lambda$ for all
$\lambda>0$ and let  $f\in \mathcal{F}_\lambda$ be arbitrary. One can easily check that for any $\epsilon>0$ there exists a sequence of Morse functions
$f_n \in \mathcal{F}_{\lambda+\epsilon}$ such that $d_{C^0}(f,f_n)\to 0$ as $n\to \infty$. For all $k,n\ge 1$ we have
$$\Phi_{u,k}(f_n) \le\Phi_u(f_n) \le \kappa_g (\lambda+1+\epsilon)\|u\circ f_n \|,$$
where the first inequality follows from the definition \eqref{Functionalk} of the functional $\Phi_{u,k}$ and the second inequality holds by the assumption that \eqref{mainbound} is true for Morse functions. Taking the limits as $k$ and $n$ go to infinity in definition \eqref{nonmorsefunct} and using the fact that $\epsilon>0$ is arbitrary,  we obtain that \eqref{mainbound} holds
for the function $f$.

It remains to prove inequality \eqref{mainbound} when $f\in \mathcal{F}_\lambda$ is Morse.
Denote by $\| \cdot \|$ the $L^2$-norm with respect to Riemannian area $\sigma$ and by $\Delta$ the Laplace-Beltrami operator with respect to $g$. The analytical tool that we are going to use is   \cite[Theorem 1.5]{PolSod} which gives us that for any continuous function $u\in C(\mathbb{R})$ and any smooth function $f$ on $M$ (which is assumed to be equal to zero on the boundary if $\partial M \neq \emptyset$), the following inequality holds
\begin{equation}\label{Generalized_Indicatrix}
\int\limits_{-\infty}^{+\infty}u(t)\beta(t,f)~dt \leq \kappa_g ( \| f \| + \| \Delta f \| )\cdot \| u\circ f \|,
\end{equation}
where $\kappa_g$ depends on the Riemannian metric $g$.
\\

If we assume that $f\in \mathcal{F}_\lambda$ in (\ref{Generalized_Indicatrix}), we immediately get
\begin{equation}\label{Inequality_Indicatrix}
\int\limits_{-\infty}^{+\infty}u(t)\beta(t,f)~dt \leq \kappa_g ( \lambda +1 )\cdot \| u\circ f \|.
\end{equation}
Since $f$ is Morse we can apply Proposition~\ref{Bound_Topological}. Combining this proposition with inequality~(\ref{Inequality_Indicatrix}) immediately yields
Theorem \ref{Persistence_Bound}. \qed
\begin{rem} It follows from the proof of Theorem \ref{Persistence_Bound} that inequality \eqref{mainbound} holds for any function in  the closure of
$\mathcal{F}_\lambda$ in $C^0$ topology.
\end{rem}
\begin{rem}
\label{Yomdin}
The proof of Theorem \ref{Persistence_Bound} suggests the following approach to proving Conjecture \ref{higherdimconj}. Recall that, by definition,  the Banach indicatrix is given by $\beta(t,f)~=~b_0(f^{-1}(t))$.
In view of Proposition \ref{Bound_Topological}, it is plausible that the following inequality holds in dimension $n\ge 3$:
$$\Phi_u(f) \le \int_{-\infty}^{+\infty}\left(\sum_{i=0}^{n-2} b_i(f^{-1}(t))\right) u(t) dt,$$
where $b_i$ is the $i$-th Betti number. As follows from  \cite{Yom}, the quantity on the right-hand side could be  bounded from above  using  the uniform  norm of the derivatives of $f$ (see also \cite{LL} for related recent developments). In order to establish Conjecture \ref{higherdimconj},  one would need to prove  a higher-dimensional analogue of \cite[Theorem 1.5]{PolSod}, allowing to replace the uniform estimates by $L^2$-bounds.
\end{rem}

\subsection{Proof of Proposition~\ref{Bound_Topological}}
Recall the notation previously introduced. For $t$ regular value of $f$, we denote $M^t=f^{-1}((-\infty,t]) \subset M$. As mentioned before $M^t$ is a submanifold with boundary $\partial M^t=f^{-1}(t)$ for $t<0$ or $\partial M^t=f^{-1}(t) \sqcup \partial M$ for $t>0.$ Let $b_k(t)=\dim H_k(M^t; \mathbb{R})$ be the Betti numbers of $M^t.$ We assume that $\min f<t<\max f$, since otherwise the inequalities obviously hold (both sides are equal to zero). We will always work with homologies with coefficients in $\mathbb{R}$ and will omit the coefficients from the notation.
\\
\\
Let $j:f^{-1}(t)\rightarrow M^t$ be the inclusion and denote by $j_*:H_{n-1}(f^{-1}(t))\rightarrow H_{n-1}(M^t)$ the induced map in homology. Since we work over $\mathbb{R}$, $j_*(H_{n-1}(f^{-1}(t))) \subset H_{n-1}(M^t)$ is a vector subspace. First, we claim that
\begin{equation}\label{Indicatrix_1}
 \beta(t,f)= \dim H_n (M^t,f^{-1}(t)) + \dim (j_*(H_{n-1}(f^{-1}(t)))).
\end{equation}
To prove this, we examine the following part of the long exact sequence of the pair $(M^t,f^{-1}(t))$:
$$H_n(M^t)\rightarrow H_n(M^t, f^{-1}(t)) \xrightarrow{\partial} H_{n-1}(f^{-1}(t)) \xrightarrow{j_*} H_{n-1}(M^t). $$
Since $f^{-1}(t)$ is an $(n-1)$-dimensional  orientable manifold, we have $\beta(t,f)=\dim H_{n-1}(f^{-1}(t))$,  and by the Rank-nullity theorem
$$\beta(t,f)=\dim H_{n-1}(f^{-1}(t))=  \dim (\ker j_*) + \dim (j_*(H_{n-1}(f^{-1}(t)))).$$
By the exactness $\dim (\ker j_*)=\dim (\im \partial)$ and, because $H_n(M^t)=0$, $\partial$ is an inclusion, which means that $\dim (\im \partial)=\dim H_n(M^t,f^{-1}(t))$ and (\ref{Indicatrix_1}) follows.
\\
\\
Second, we note that
\begin{equation}\label{Finite_Bars_Degree_1}
\sum_j \chi_{(x_j^{(n-1)},y_j^{(n-1)}]}(t)= \dim (\ker i_*),
\end{equation}
where $i:M^t \rightarrow M$ is the inclusion and $i_*:H_{n-1}(M^t) \rightarrow H_{n-1}(M)$ induced map on homology. This comes from the fact that finite bars in barcode of $f$ correspond to homology classes which appear throughout filtration process, but do not exist in actual homology of $M.$ Denote by $\widetilde{M}^t=f^{-1}([t,+\infty))$, $\widetilde{M}^t \cap M^t=f^{-1}(t)$, and by $\tilde{j}:f^{-1}(t) \rightarrow \widetilde{M}^t$ and $\tilde{i}:\widetilde{M}^t \rightarrow M$ the inclusions. We examine the following part of the Mayer-Vietoris sequence
$$H_{n-1}(f^{-1}(t)) \xrightarrow{(j_*,\tilde{j}_*)} H_{n-1}(M^t)\oplus H_{n-1}(\widetilde{M}^t)\xrightarrow{i_*-\tilde{i}_*} H_{n-1}(M). $$
From the exactness we have
$$\ker i_* \cong \ker (i_* - \tilde{i}_*) \cap (H_{n-1}(M^t),0)=\im (j_*,\tilde{j}_*) \cap (H_{n-1}(M^t),0),$$
while on the other hand
$$\im (j_*, \tilde{j}_*)\cap (H_{n-1}(M^t),0)= \{ (j_* (a), \tilde{j}_* (a)) | a\in H_{n-1}(f^{-1}(t)),\tilde{j}_*(a)=0 \}=$$
$$= \{ (j_* (a), 0) | a\in \ker \tilde{j}_* \subset H_{n-1}(f^{-1}(t)) \} \cong j_*(\ker \tilde{j}_*),$$
and thus
$$\ker i_* \cong j_*(\ker \tilde{j}_*).$$
However, since $\tilde{i}_* \circ \tilde{j}_* = i_* \circ j_*,$ we see that $j_*(\ker \tilde{j}_*) \subset \ker i_*$ and thus
\begin{equation}\label{Kernel_Formula}
  \ker i_* = j_*(\ker \tilde{j}_*).
\end{equation}
From now on, we distinguish two cases.
\\
\\
\textbf{Case I - $\partial M^t=f^{-1}(t)$}
\\
\\
This case covers the situation when $\partial M=\emptyset$ and when $\partial M \neq \emptyset$, but $t<0$. The left hand sides of the inequalities (\ref{Estimate_Top_NoBdry}) and (\ref{Estimate_Top_Bdry}) are equal for $t<0$ and thus we need to prove (\ref{Estimate_Top_NoBdry}). By the case-assumption, we have that
$$\dim H_n(M^t, f^{-1}(t))=b_0(t),$$
and from the definition of barcode we know that
$$b_0(t)=\chi_{(\min f,\max f]}(t)+\sum_i \chi_{(x_i^{(0)},y_i^{(0)}]}(t).$$
Combining these equalities with (\ref{Indicatrix_1}) renders the statement into
$$\sum_j \chi_{(x_j^{(n-1)},y_j^{(n-1)}]}(t) \leq \dim (j_*(H_{n-1}(f^{-1}(t)))),$$
which after substituting (\ref{Finite_Bars_Degree_1}) and (\ref{Kernel_Formula}) becomes
$$\dim (j_*(\ker \tilde{j}_*)) \leq \dim (j_*(H_{n-1}(f^{-1}(t)))).$$
This inequality is obvious because $\ker \tilde{j}_* \subset H_{n-1}(f^{-1}(t))$.
\\
\\
\textbf{Case II - $\partial M^t=f^{-1}(t) \sqcup \partial M,~\partial M \neq \emptyset$}
\\
\\
This case covers the situation when $\partial M\neq \emptyset$ and $t>0.$ We need to prove (\ref{Estimate_Top_Bdry}), which for $t>0$ becomes
$$\sum_i \chi_{(x_i^{(0)},y_i^{(0)}]}(t) + \sum_j \chi_{(x_j^{(n-1)},y_j^{(n-1)}]}(t) \leq \beta(t,f).$$
Denote by $\partial M = \Sigma_1 \sqcup \ldots \sqcup \Sigma_l$ the boundary components of the whole manifold $M$, where $\Sigma_i$ are connected, orientable, $(n-1)$-dimensional manifolds. Now the boundary of $M^t$ is $\partial M^t=f^{-1}(t) \sqcup \Sigma_1 \sqcup \ldots \sqcup \Sigma_l.$ We may divide connected components of $M^t$ into two groups, one of which consists of all the components whose boundary lies entirely in $f^{-1}(t)$ and the other one consists of all the components whose boundary contains at least one $\Sigma_i$ (i.e. the boundary of these components is a mix of parts of $\partial M$ and $f^{-1}(t)$). Denote by $k$ the number of connected components of $M^t$ whose boundary contains at least one $\Sigma_i.$ Now, since
$$\dim H_n(M^t, f^{-1}(t) \sqcup \partial M)=b_0(t),$$
we have that
$$\dim H_n(M^t, f^{-1}(t))=b_0(t)-k,$$
and thus by (\ref{Indicatrix_1})
$$\beta(t,f)=b_0(t)-1 + \dim (j_*(H_{n-1}(f^{-1}(t))))-(k-1).$$
Since $M$ is connected
$$b_0(t)-1=\sum_i \chi_{(x_i^{(0)},y_i^{(0)}]}(t),$$
and hence we need to prove that
$$\sum_j \chi_{(x_j^{(n-1)},y_j^{(n-1)}]}(t) \leq \dim (j_*(H_{n-1}(f^{-1}(t))))-(k-1).$$
Using (\ref{Finite_Bars_Degree_1}) we transform the statement into
$$\dim (\ker i_*) + k-1 \leq \dim (j_*(H_{n-1}(f^{-1}(t)))).$$
In order to prove this inequality, we will find $k-1$ linearly independent vectors in the quotient space $j_*(H_{n-1}(f^{-1}(t))) / \ker i_*$ (note that by (\ref{Kernel_Formula}) we have that $\ker i_*=j_*(\ker \tilde{j}_*) \subset j_*(H_{n-1}(f^{-1}(t)))$). Assume that $k\geq 2$ (because otherwise the statement is trivial) and denote by $M_1^t, \ldots , M_k^t$ the connected components of $M^t$ whose boundary contains some $\Sigma_i.$ We know that for $1\leq i \leq k,$ homology class $0=[\partial M_i^t]\in H_{n-1}(M^t)$ decomposes as $0=[\partial M_i^t]=d_i+e_i,$ where $d_i=[\Sigma_{i_1}]+\ldots + [\Sigma_{i_{m_i}}]$ for some $[\Sigma_{i_1}], \ldots , [\Sigma_{i_{m_i}}]$ and $e_i\in j_*(H_{n-1}(f^{-1}(t))).$
Moreover, since $M_i^t$ are disjoint, we have that
$$d_1+\ldots + d_k = [\Sigma_1]+\ldots + [\Sigma_l],$$
and $d_1, \dots , d_k$ partition the set $\{[\Sigma_1], \ldots, [\Sigma_l] \}.$ We have that $$d_1,\ldots , d_k \in j_*(H_{n-1}(f^{-1}(t))),$$  because $d_i=-e_i$, and let $[d_1],\ldots , [d_k] \in j_*(H_{n-1}(f^{-1}(t))) / \ker i_*$ be the corresponding classes inside the quotient space. We claim that any $k-1$ of $[d_1],\ldots , [d_k]$ are linearly independent. Once we prove this, choosing and $k-1$ of these gives us the $k-1$ classes that we need.
\\
\\
First, we observe that $[\Sigma_1],\ldots , [\Sigma_l] \in H_{n-1}(M^t)$ are linearly independent. In order to prove this,  consider the following part of the long exact sequence of the pair
$(M^t,\partial M)$:
$$H_n(M^t,\partial M) \rightarrow H_{n-1}(\partial M) \rightarrow H_{n-1}(M^t).$$
Note that $H_n(M^t,\partial M)=0$.  Indeed, if  $H_n(M^t,\partial M)\neq 0$,  then $M^t$ contains a connected component $N$, such that $\partial N \subset \partial M.$ However, this implies that $f^{-1}(t)\cap N=\emptyset,$ or equivalently, $N\subset f^{-1}((-\infty,t)).$ It is now easy to check that $N\subset M$ is both an  open and a closed subset, which contradicts the fact that $M$ is connected. Therefore,  $H_n(M^t,\partial M)=0$, and hence  $H_{n-1}(\partial M) \rightarrow H_{n-1}(M^t)$ is an injection, i.e. $[\Sigma_1],\ldots , [\Sigma_l]$ are linearly independent. This further implies that $d_1,\ldots , d_k$ are linearly independent.
\\
\\
Classes $i_* d_1, \ldots , i_* d_k\in H_{n-1}(M)$ satisfy
$$i_* d_1 + \ldots + i_* d_k=i_*[\Sigma_1]+\ldots + i_*[\Sigma_l]=[\partial M]=0.$$
By using the exactness of the following part of the long exact sequence of the pair $(M,\partial M)$
$$0=H_n(M)\rightarrow  H_n(M,\partial M) \rightarrow H_{n-1}(\partial M) \rightarrow H_{n-1}(M),$$
we conclude that
$$i_* [\Sigma_1]+ \ldots + i_* [\Sigma_l]=0,$$
is the only relation which $i_*[\Sigma_1], \ldots , i_*[\Sigma_l]$ satisfy. More formally, restriction of $i_*$ to $\langle [\Sigma_1], \ldots , [\Sigma_l] \rangle_{\mathbb{R}}\subset H_{n-1}(M^t)$ has the one-dimensional kernel given by
$$\ker (i_* \vert_{\langle [\Sigma_1], \ldots , [\Sigma_l] \rangle_{\mathbb{R}}})= \langle [\Sigma_1] + \ldots + [\Sigma_l] \rangle_{\mathbb{R}}.$$
This readily implies the only relation which $i_* d_1, \ldots , i_* d_k$ satisfy is that their sum is zero, that is
$$\ker ( i_* \vert_{\langle d_1, \ldots , d_k \rangle_{\mathbb{R}}})= \langle d_1 + \ldots + d_k \rangle_{\mathbb{R}}.$$
Finally, combining the last equality with the fact that $d_1,\ldots , d_k$ are linearly independent immediately gives that any $k-1$ of $[d_1],\ldots , [d_k]$ are linearly independent. \qed
\section{Miscellaneous proofs}
\label{misc}
\subsection{Proof of Lemma \ref{lemma:semicon}}
\label{subs:semicon}
It follows directly from the definitions and non-negativity of $u$ that
$$\sum\limits_{I\in \mathcal{B'}(h)} \int\limits_I u(t)~dt\geq \sum\limits_{\tilde{I}\in \mathcal{B'}(f)} \int\limits_{\tilde{I}} u(t)~dt - 2 |\mathcal{B}'(f)|\cdot  \max_{[\min f, \max f]} u  \cdot d_{bottle}(\mathcal{B}(f),\mathcal{B}(h)),$$
which means that we are left to prove that
\begin{equation}\label{CASE1}
\int\limits_{\min f}^{\max f} u(t)~dt - \int\limits_{\min h}^{\max h} u(t)~dt \leq 2\cdot  \max_{[\min f, \max f]} u  \cdot d_{bottle}(\mathcal{B}(f),\mathcal{B}(h)),
\end{equation}
if $\partial M= \emptyset$ and
\begin{equation}\label{CASE2}
\int\limits_{\min f}^0 u(t)~dt - \int\limits_{\min h}^0 u(t)~dt \leq \max_{[\min f, \max f]} u  \cdot d_{bottle}(\mathcal{B}(f),\mathcal{B}(h)),
\end{equation}
if $\partial M \neq \emptyset.$
Let us prove (\ref{CASE2}). If $\min f \geq \min h$ the left hand side of (\ref{CASE2}) is non-positive and hence the inequality trivially holds. If $\min f < \min h$ we need to prove that
$$\int\limits_{\min f}^{\min h} u(t)~dt \leq \max_{[\min f, \max f]} u  \cdot d_{bottle}(\mathcal{B}(f),\mathcal{B}(h)).$$
However, in every $(d_{bottle}(\mathcal{B}(f),\mathcal{B}(h))+\varepsilon)$-matching the infinite bar $(\min f, +\infty)\in \mathcal{B}(f)$ has to be matched with some infinite bar $(a,+\infty) \in \mathcal{B}(h)$ and since $\min h$ is the smallest of all endpoints of all infinite bars in $\mathcal{B}(h),$ we have that
$$\min h - \min f \leq a - \min f \leq d_{bottle}(\mathcal{B}(f),\mathcal{B}(h))+\varepsilon.$$
Since the above holds for all $\varepsilon > 0$ the inequality is proven. To prove (\ref{CASE1}) one proceeds in the similar fashion, by analysing cases depending on the relative position of $\min f, \min h$ and $\max f, \max h.$ This completes the proof of the lemma. \qed
\subsection{Proof of Lemma \ref{Lipsch}}
\label{lemmaproof}
We prove the statement in the case of $M$ without boundary, the other case is treated the same way. Let $\mathcal{B}(f)$ and $\mathcal{B}(\tilde{f})$ be two barcodes associated to two Morse functions and denote finite bars by $I_i\in \mathcal{B}(f)$, $\tilde{I}_j\in \mathcal{B}(\tilde{f})$ where intervals are sorted by integral of $u$ as before. Assume that $\Phi_{u,k}(\mathcal{B}(f))\geq \Phi_{u,k}(\mathcal{B}(\tilde{f}))$ and $\mu:\mathcal{B}(f)\rightarrow \mathcal{B}(\tilde{f})$ is an $\varepsilon$-matching between these barcodes (we add bars of length 0 if needed and assume that $\mu$ is a genuine bijection). For every finite bar $I\in \mathcal{B}(f)$ we have that the distance between endpoints of $I$ and $\mu(I)\in \mathcal{B}(\tilde{f})$ is less or equal than $\varepsilon$ and hence
$$\bigg| \int_I u(t)~dt-\int_{\mu(I)}u(t)~dt \bigg| \leq 2 \varepsilon \max u.$$
Also $|\min f-\min \tilde{f}|\leq \varepsilon$ and $|\max f-\max \tilde{f}|\leq \varepsilon$ and hence
$$\bigg| \int\limits_{\min f}^{\max f} u(t)~dt - \int\limits_{\min \tilde{f}}^{\max \tilde{f}} u(t)~dt \bigg| \leq 2\varepsilon \max u. $$
Using these estimates and the fact that the integrals of $u$ over $\tilde{I}_j$ decrease with $j$ we get
$$0 \leq \Phi_{u,k}(\mathcal{B}(f)) - \Phi_{u,k}(\mathcal{B}(\tilde{f}))=\int\limits_{\min f}^{\max f} u(t)~dt - \int\limits_{\min \tilde{f}}^{\max \tilde{f}} u(t)~dt + \sum\limits_{i=1}^k \int\limits_{I_i} u(t)~dt - $$
$$- \sum\limits_{j=1}^k \int\limits_{\tilde{I}_j} u(t)~dt \leq2\varepsilon \max u + \sum\limits_{i=1}^k \int\limits_{I_i} u(t)~dt - \sum\limits_{i=1}^k \int\limits_{\mu(I_i)} u(t)~dt  \leq \varepsilon(2k+2)\max u.$$  Taking infimum over all $\varepsilon$-matchings finishes the proof. \qed
\subsection{Proof of Proposition \ref{prop:torus}}
\label{subs:torus}
The barcode $\mathcal{B}(f_l)$ of the function
$$f_l(x_1,\ldots , x_n)=\frac{\sqrt{2}}{n(2\pi)^{\frac{n}{2}}}(\sin lx_1 + \ldots + \sin l x_n),~l\in \mathbb{N}.$$
can be computed using  the K\"unneth formula for persistence modules. Below we briefly explain how to apply  this formula and refer the reader to \cite{PSS} for a more detailed treatment.

Given two Morse functions $f:M_1\rightarrow \mathbb{R}$ and $h:M_2\rightarrow \mathbb{R}$ we define another Morse function $f\oplus h:M_1 \times M_2 \rightarrow \mathbb{R}$ by setting $f\oplus h(x_1,x_2)=f(x_1)+h(x_2).$ Barcode $\mathcal{B}(f\oplus h)$ may be computed from $\mathcal{B}(f)$ and $\mathcal{B}(h)$ via the following procedure:
\begin{itemize}
\item An infinite bar $(a,+\infty)\in \mathcal{B}_i(f)$ and an infinite bar $(c,+\infty)\in \mathcal{B}_j(h)$ produce an infinite bar $(a+c,+\infty)\in \mathcal{B}_{i+j}(f\oplus h).$
\item An infinite bar $(a,+\infty)\in \mathcal{B}_i(f)$ and a finite bar $(c,d] \in \mathcal{B}_j(h)$ produce a finite bar $(a+c,a+d]\in \mathcal{B}_{i+j}(f\oplus h).$ The same bar is produced if $(c,d] \in \mathcal{B}_i(f)$ and $(a,+\infty)\in \mathcal{B}_j(h).$
\item A finite bar $(a,b]\in \mathcal{B}_i(f)$ and a finite bar $(c,d] \in \mathcal{B}_j(h)$ produce two finite bars $(a+c, \min \{ a+d,b+c \}]\in \mathcal{B}_{i+j}(f\oplus h)$ and $(\max \{ a+d,b+c \},b+d] \in \mathcal{B}_{i+j+1}(f\oplus h).$
\end{itemize}
In order to compute $\mathcal{B}(f_l)$ it is enough to compute the barcode of $\sin lx_1 + \ldots + \sin l x_n$ and rescale. In the light of the computational procedure described above we wish to look at $\sin l x:\mathbb{S}^1 \rightarrow \mathbb{R}$ and use $\mathbb{T}^n=(\mathbb{S}^1)^n.$ One readily checks that
$$\mathcal{B}_0(\sin l x)=\{ (-1,+\infty),(-1,1] \times (l-1) \},~ \mathcal{B}_1(\sin l x)=\{ (1,+\infty) \}$$
and hence $$\mathcal{B}_0(\sin l x_1 + \sin l x_2)=\{ (-2,+\infty),(-2,0] \times (l^2-1) \},$$ $$\mathcal{B}_1(\sin l x_1 + \sin l x_2)=\{ (0,+\infty) \times 2,(0,2] \times (l^2-1) \},$$
$$\mathcal{B}_2(\sin l x_1 + \sin l x_2)=\{ (2,+\infty) \}.$$
We claim that $\mathcal{B}(\sin lx_1 + \ldots + \sin l x_n)$ contains $2^n$ infinite bars and $\frac{1}{2}((2l)^n-2^n)$ finite bars. To prove the claim we use induction in $n.$ We have already checked that the statement holds for $n=1,2.$ To complete the induction step note that, in general, if $\mathcal{B}(f)$ contains $k_1$ infinite and $m_1$ finite bars, and
$\mathcal{B}(h)$ contains $k_2$ infinite and $m_2$ finite bars,  then $\mathcal{B}(f\oplus h)$ contains $k_1k_2$ infinite and $k_1m_2+m_1k_2+2m_1m_2$ finite bars. Taking $k_1=2^n$, $m_1=\frac{1}{2}((2l)^n-2^n)$ and $k_2=2$, $m_2=l-1$ yields the proof.

Finally, notice that via the described procedure an infinite bar and a bar of length $2$  produce a bar of length $2$, as well as that two  bars of length $2$ produce two  new bars of length $2$. Since we start with  $\mathcal{B}(\sin l x)$ for which all finite bars have length $2$, we conclude that all finite bars in $\mathcal{B}(\sin lx_1 + \ldots + \sin l x_n)$  have length $2$,
 and thus
\begin{equation}\label{Example_Multidimensional}
  \Phi_1(\sin lx_1 + \ldots + \sin l x_n)=2n+(2l)^n-2^n.
\end{equation}
Rescaling (\ref{Example_Multidimensional}) gives us
\begin{equation}
\label{last}
\Phi_1(f_l)=\frac{\sqrt{2}}{n(2\pi)^{\frac{n}{2}}}2^nl^n+\frac{\sqrt{2}}{n(2\pi)^{\frac{n}{2}}}(2n-2^n).
\end{equation}
Since $l=\sqrt{\lambda}$ we have that
$$\Phi_1(f_l)=A_n\lambda^\frac{n}{2}+B_n,$$
with the constants $A_n,B_n$ given explicitly by \eqref{last}. This completes the proof of Proposition \ref{prop:torus}. \qed

\subsection*{Acknowledgments} The authors are grateful to Lev Buhovsky for providing Example \ref{example:buh} that has lead to a reformulation of Conjectures \ref{higherdimconj_Weak} and \ref{higherdimconj}. The authors would like to thank Yossi Azar, Allan Pinkus and Justin Solomon for useful discussions, as well as Lev Buhovsky and Mikhail Sodin for helpful remarks on the early version of the paper. We also thank Yuliy Baryshnikov for bringing the reference \cite{CSEHM} to our attention. Part of this research was accomplished while I.P.
was supported by the Weston Visiting Professorship program at the Weizmann Institute of Science.

\end{document}